  \newtheorem{theorem}{Theorem}
 \newtheorem{lemma}[theorem]{Lemma}
 \newtheorem{proposition}[theorem]{Proposition}
\theoremstyle{definition}
\newcommand{\N}{\ensuremath{\mathbb N}} 
\newcommand{\R}{\ensuremath{\mathbb R}} 
\newcommand{\Z}{\ensuremath{\mathbb Z}} 
\title{Monotone and convex restrictions of continuous functions}
\author{Zolt\'an Buczolich\thanks{
Research supported by National Research, Development and Innovation Office--NKFIH, Grant 104178.
\newline\indent {\it 2000 Mathematics Subject
Classification:} Primary : 26A15; Secondary : 26A12, 26A51, 28A78.
\newline\indent {\it Keywords:} Hausdorff dimension, Minkowski (box) dimension, typical/generic functions, H\"older spaces.},
Department of Analysis, E\"otv\"os Lor\'and\\
University, P\'azm\'any P\'eter S\'et\'any 1/c, 1117 Budapest, Hungary\\
email: buczo@cs.elte.hu\\
{\tt www.cs.elte.hu/\hbox{$\sim$}buczo}
}
\date{\today}
\begin{document}
\maketitle

\medskip

\begin{center}
{\em Dedicated to Jean-Pierre Kahane on the occasion of his $90^{\text ieth}$ birthday}
\end{center} 


\begin{abstract}
Suppose that $f$ belongs to a suitably defined complete metric space $\cac^{\aaa}$
of H\"older $\aaa$-functions defined on $[0,1]$.
We are interested in whether one can find large (in the sense of Hausdorff, or lower/upper Minkowski dimension) sets  $A\sse [0,1]$
such that $f|_{A}$ is monotone, or convex/concave.
Some of our results are about generic functions in $\cac^{\aaa}$
like the following one: we prove that for a generic $f\in C_{1}^{\aaa}[0,1]$, $0<\aaa<2$ 
  for any $A\sse [0,1]$
 such that  $f|_{A}$ is convex, or concave we have 
$\dimh A\leq \ldimm A\leq \max \{ 0, \aaa-1 \}.$
On the other hand we also have some results about all functions belonging to a certain space. For example the previous result is complemented by the following
one: for $1<\aaa\leq 2$ for any $f\in C^{\aaa}[0,1]$
  there is always a set $A\sse[0,1]$  such that $\dimh A=\aaa-1$ and $f|_{A}$ is convex, or concave on $A$.
\end{abstract}


\section{Introduction}
In \cite{[KK1]} and \cite{[KK2]} J-P. Kahane and Y. Katznelson  showed that
for every real-valued $f\in C[0,1]$ there are closed sets $A\sse [0,1]$
 such that $\dimh A\geq 1/2$ and $f$ is of bounded variation on $A.$
 (We denote by $\dimh A$, $\udimm A$, and $\ldimm A$  the Hausdorff, the upper and lower Minkowski (box) dimension of the set $A$, respectively.)
 This theorem was rediscovered independently in a more general setting by
 A. M\'ath\'e in \cite{[Ma1]}  when only Lebesgue measurability is assumed
 about $f$. 
 Several further variants of restrictions of bounded variation were considered in \cite{[ABMP]}, \cite{[E]} and \cite{[KK1]}, these
include optimality of the bound $\dimh A\geq 1/2$, restrictions of generic continuous functions and functions with values
in $\R^{d}$.

 In \cite{[KK1]} H\"older restrictions were also considered
 for example it was proved that for $0<\aaa<1$ for a continuous $f$  there exists a closed set
 $A$  such that $\dimh A=1-\aaa$ and $f\in C^{\aaa}(A)$. In both cases
 it was illustrated by examples that the dimension bounds on $A$ are best possible.
 In \cite{[KK2]} Kahane and Katznelson showed that for a typical/generic (in the sense of Baire category) $f\in C[0,1]$ for any $0<\aaa<1$ if $f|_{A}$ is in 
 $C^{\aaa}(A)$ then $\ldimm A\leq 1-\aaa$. This result for Hausdorff dimension was obtained by M. Elekes in \cite{[E]}.
 In \cite{[ABMP]} the following generic/typical result was proved
 (see the definition of  $C_{1}^{\aaa}[0,1]$ in Section \ref{*secnota*}).
 Suppose that $0<\bbb<1$. For a generic $f\in {C_{1}^{\bbb}[0,1]}$ 
if $f|_{A}\in C^{\aaa}(A)$ for some $\bbb<\aaa\leq 1$ then $\dimh A\leq 1-\aaa$.
 
Kahane and Katznelson in \cite{[KK1]} also discussed H\"older restrictions
of H\"older functions, for example they showed that there exist functions $f\in C^{\bbb}[0,1]$
 such that if $f|_{A}\in C^{\aaa}(A)$ then $\dimh A\leq \frac{1-\aaa}{1-\bbb}$.
 They also asked whether this result was best possible. It turned out that this
result was not the best possible and a sharp upper bound was provided by  
O. Angel, R. Balka, A. Máthé and Y. Peres in \cite{[ABMP]}.

In \cite{[HL]} P. Humke and M. Laczkovich proved that  if $\fff$ is a porosity premeasure then a typical/generic continuous function on $[0,1]$ intersects every monotone function in a bilaterally strongly $\fff$-porous set. This implies that if the restriction of
a generic continuous function is monotone on a set $A$ then
$\dimh A=0$.
Kahane and Katznelson also considered in \cite{[KK1]} monotone restrictions
of continuous functions and showed that there exists 
$f\in C[0,1]$  such that if $f|_{A}$
is monotone then $\dimh A=0$. It is a natural question
whether such a non-empty compact set $A$ exists at all
for any $f\in C[0,1]$. This question was asked and answered a long time ago.
F. Filipczak proved in \cite{[Fi]} that
for any $f\in C[0,1]$  there exists a perfect,
non-empty set $A\sse [0,1]$  such that $f|_{A}$ is monotone. 
This question of Real Analysis has a graph theoretical interpretation:
Color the complete graph with vertices $x,y\in [0,1]$. An edge
$x\to y$ is red if $\frac{f(y)-f(x)}{y-x}>0$, otherwise it is blue.
{Ramsey's theorem} implies that there exists an infinite $A\sse [0,1]$
 such that all edges between points of $A$ are of the same color.
 In the language of Analysis this means that $f|_{A}$ is monotone.

 We remark that S. Todorčević has some abstract results which offer a criterion for the existence of homogeneously colored perfect sets in colorings of analytic spaces see \cite{[TF]}.

 One can ask similar questions  about convexity, or $n$-convexity.
Recall that the first divided difference is $f[x_{1},x_{2}]=\frac{f(x_{2})-f(x_{1})}
{x_{2}-x_{1}}$,  the second divided difference is $f[x_{1},x_{2},x_3]=\frac{f[x_{2},x_{3}]-f[x_{1},x_{2}]}{x_{3}-x_{1}}$ and if the $(n-1)^{\text{st}}$ divided difference is given then the $n^{\text{th}}$ is 
$$f[x_{1},...,x_n]=\frac{f[x_{2},...,x_{n}]-f[x_{1},...,x_{n-1}]}
{x_{n}-x_{1}}.$$
For $f\in C[0,1]$, color the hypergraph $(x_{1},...,x_{n+1})\in [0,1]^{n+1}$
by red if $f[x_{1},...,x_{n+1}]>0$, by blue otherwise.
Again Ramsey Theory implies that  there exists a  homogenously colored infinite subgraph, that is, there exists an
 infinite $A\sse [0,1]$ such that $f|_{A}$ is $n$-convex, or $n$-concave.
S. Agronski, A. M. Bruckner, M. Laczkovich, and D. Preiss asked in \cite{[ABLP]}
the following question:

{\it Suppose $f\in C[0,1]$ and $n\in \N$. Does there exist $A\sse [0,1]$, non-empty, perfect such that $f|_{A}$ is $n$-convex, or $n$-concave?}

Filipczak's theorem is the $n=1$ case.
Concerning the $n=2$ case, the case of
ordinary convexity, in \cite{[Busconv]} Buczolich proved the following:

{\it For every $f\in C[0,1]$ at least one of the following is true:\\
(i) There exists an interval $I\sse [0,1]$  such that $f|_{I}$ is convex.\\
(ii) There exists an interval $I\sse [0,1]$  such that $f|_{I}$ is concave.\\
(iii) There exist  $A_{1},A_{2}\sse [0,1]$ non-empty perfect such that $f|_{A_{1}}$ is strictly convex and $f|_{A_{2}}$ is strictly concave.}

It is interesting that this result does not hold for higher convexity. 
 A. Olevski\u{\i} in
 \cite{[Ol]} proved that
 there exists a Lipschitz $f:[0,1]\to\R$ 
 such that $f$ is neither $3$-convex, nor $3$-concave on any
non-empty perfect set. 
 
Given the combinatorial/Ramsey theory background P. Erd\H os asked
from the author the following question:

{\it  Suppose that $f:[0,1]\to \R$ is not convex on any
  $r$ element set $3<r\leq \omega$. What can be said about $f$?}

It  was answered in \cite{[Buramsey]}.

The main goal of the current paper is to obtain results about
convex restrictions of functions belonging to different Hölder classes.
Since the derivatives of convex functions are monotone, these
results are going hand in hand with results about monotone
restrictions and in Section \ref{*secmon*} we discuss such results.
In Theorem \ref{*th4} we show that for a generic/typical $f\in \cea$, 
when $0\leq \aaa<1$ if $A\sse [0,1]$ and $f|_{A}$ is monotone then
 $\ldimm A\leq \aaa$.
 It is rather easy to see, and it is at least implicitely well-known, what is stated in Theorem \ref{*th3}:  for any
 $0<\aaa\leq 1$ if $f\in C^{\aaa}[0,1]$ then  there exists 
$A\sse [0,1]$  such that $f|_{A}$ is monotone and $\dimh A \geq \aaa$.
 According to a result from \cite{[ABMP]} (stated as Theorem \ref{*th5a*}
in this paper)
 if $0<\aaa<1$
and  $B(t)$  is a fractional Brownian motion of Hurst index
$\aaa$ then almost surely  $B|_{A}$ is not monotone increasing for any $A$ with $\udimm A>\max\{ 1-\aaa,\aaa \}$.
A fractional Brownian motion of Hurst index
$\aaa$ almost surely belongs to $C^{\aaa-}[0,1]$, but not to
$C^{\aaa}[0,1]$.
In \cite{[ABMP]} for a dense set of $\widehat{\aaa}$s in $[1/2,1]$ examples of self similar functions  $f\in C^{\widehat{\aaa}}[0,1]$ were also provided
for which  $f|_{A}$ is not monotone for any $A$ with $\udimm A>\widehat{\aaa} $.
I learned from R. Balka about an unpublished argument of A. Máthé, which  implies that for any
function $f:[0,1]\to \R$ one can always find a set $A$ such that 
$f|_{A}$ is monotone and $\udimm A\geq 1/2 $.

In Section \ref{*seccvx*} we turn to convex restrictions.
In Theorem \ref{*th11} we see that for a typical $f\in\cea$,
$0\leq \aaa<2$   there is always
 a set $A\sse [0,1]$  such that $f|_A$ is convex and $\udimm A=1$.
 in Theorem \ref{*th6} we "integrate" the result of Theorem \ref{*th3}
 to show that for $1<\aaa\leq 2$ for any  $f\in C^{\aaa}[0,1]$
  there is always a set $A\sse[0,1]$  such that $\dimh A=\aaa-1$ and $f|_{A}$ is convex, or concave on $A$.
In the Theorem \ref{*th12} and Lemma \ref{*lemAB*} we see that the results about generic functions and monotone restrictions can be ``integrated" to obtain results about generic functions and convex restrictions.
In Theorem \ref{*th11b} we prove that for a generic $f\in C_{1}^{\aaa}[0,1]$, $0\leq \aaa<2$ 
  for any $A\sse [0,1]$
 such that  $f|_{A}$ is convex, or concave we have 
$\ldimm A\leq \max \{ 0, \aaa-1 \}.$

We mention in the end of the paper a result (Theorem \ref{*5b*})
according to which there are functions in $f\in C^{\aaa}_{1}[0,1]$,
$1\leq \aaa<2$
  such that 
  $f|_{A}$ is not convex, nor concave for any $A\sse [0,1]$ with $\udimm A>\aaa-1$.
For $3/2\leq \aaa<2$ 
by integrating Fractional Brownian motions of Hurst index
$\aaa-1$ one can obtain functions $f \in C^{\aaa-}[0,1]$
with the property that $f|_{A}$ is not convex, nor concave for any $A\sse [0,1]$ with $\udimm A>\aaa-1$.
By using from \cite{[ABMP]}  the earlier mentioned dense set of $\widehat{\aaa}$s in $[1/2,1]$
taking integrals of the 
corresponding self-similar functions one  can obtain a dense set in $3/2\leq \aaa<2$ and functions $f \in C^{\aaa}[0,1]$
with the property that $f|_{A}$ is not convex, nor concave for any $A\sse [0,1]$ with $\udimm A>\aaa-1$.
For $1\leq \aaa <3/2$ 
it cannot obtained by "integrating"  a theorem about monotone restrictions.
Theorem \ref{*5b*} is proved for $1<\aaa<2$ in \cite{[crm]}.

The authors thanks R. Balka and the referee for several suggestions 
which improved the paper.

\section{Notation and preliminary results}\label{*secnota*}

For $\aaa\geq 0$ if $f:[0,1]\to\R$ is $\lf \aaa \rf$-times differentiable 
(by definition $f^{(0)}=f$)
we put 
\begin{equation}\label{*defcalla*}
\calla(f)=\sup\Big \{ \frac{|f^{(\lf \aaa \rf)}(x)-f^{(\lf \aaa \rf)}(y)|}{|x-y|^{\{ \aaa \}}}:x\not=y,\  x,y\in [0,1] \Big \}.
\end{equation}
\\
By $C^{0}[0,1]$, or $C[0,1]$ we denote the class of continuous
functions on $[0,1]$.

If $0<\aaa<1$ then $f$ is in $C^{\aaa}[0,1]$, if $\calla(f)<\oo$.

For $\aaa=n\in\{ 1,2,... \}$ the function $f$ is in $C^{n}[0,1]$ if
$f^{(n)}$ is continuous on $[0,1]$.
 
If $1<\aaa <2$ then  $f$ is in $ C^{\aaa}[0,1]$ if $f$ is  differentiable, 
and $f^{'}\in C^{\aaa-1}[0,1]$,
that is $\calla(f)<\oo$. 

We denote by $ C^{\aaa-}[0,1]$ 
the set of those functions which are in $C^{\bbb}[0,1]$ for all $\bbb<\aaa$.

The class of Lipschitz functions, the functions for which $${\call}_{0,1}(f)=
\sup\Big \{ \frac{|f(x)-f(y)|}{|x-y|}:x\not=y,\  x,y\in [0,1] \Big \}<+\oo$$ is denoted by $C^{0,1}[0,1]$. While   $C^{1,1}[0,1]$ denotes the class of those $f\in C^{1}[0,1]$,
for which $f'\in  C^{0,1}[0,1]$.

If $f\in C[0,1]= C^{0}[0,1]$ then
$||f||_{0}=\sup_{x\in [0,1]}|f(x)|$. For $\aaa=n\in \N$
we have $||f||_{\aaa}=||f||_{n}=\sup_{j\in\{0,...,n\}}||f^{(j)}||_{0}$.
The open balls in $C^{\aaa}[0,1]$ of radius $r$, centered at $f\in C^{\aaa}[0,1]$,   are denoted by $B_{\aaa}(f,r)$.

Some special subspaces. For $0\leq \aaa$, $\aaa\not\in \N$  $f$ is in $C_{1}^{\aaa}[0,1]$ if
$\calla(f)\leq 1$, that is $|f^{(\lf \aaa  \rf)}(x)-f^{(\lf \aaa  \rf)}(y)|\leq |x-y|^{\aaa}$ for all $x,y\in [0,1]$.
It is clear that $C_{1}^{\aaa}[0,1]$ is a closed, separable and  complete subspace of
$C^{\lf \aaa \rf}[0,1]$ when we use the subspace metric $||f-g||_{\lf \aaa \rf}$
for $f,g\in C_{1}^{\aaa}[0,1]$. 
When working in these spaces we will keep the notation 
$B_{\aaa}(f,r)$ for the open balls in these subspaces.

Suppose $A\sse \R^{n}$.
The system $\{ U_{j} \} \text{ is a } {\ddd\text{-cover of }A}$
 if $\diam(U_{j})<\ddd$  for all $ j$, and $A\sse\cup_{j}U_{j}$.

The $\aaa$-dimensional Hausdorff measure (see its definition for example in \cite{Fa1}) is denoted by $\cah^{\aaa}$. Recall that the Hausdorff dimension of $A\sse \R^{n}$
is given by
 $$\ds \dimh A=\inf\{ \aaa : \cah^{\aaa}(A)=0\}=$$ $$\inf\{\aaa:\ex C_{\aaa}>0,\ \ax\ddd>0,\  \ex \{ U_{j} \} \text{ a } \ddd\text{-cover of }A  \text{ s.t.} \sum_{j}(\diam(U_{j}))^{\aaa}<C_{\aaa} \}.$$

Given an integer $k\geq 2$ and a set $A\sse [0,1]$ we put
$$\can_{k,\ell }(A)=\#\Big \{ j\in \Z:A\cap \Big [ \frac{j-1}{k^{\ell }},\frac{j}{k^{\ell }} \Big ]\not=\ess \Big \}.$$
Most often we use the $k=2$, or $k=10$ cases, $\cantl(A)$, or $\cankl(A)$.

The upper and lower Minkowski (or box) dimension of $A$ is defined as
\begin{equation}\label{*minkdi}
\udimm A=\limsup_{\ell \to\oo}\frac{\log \can_{k,\ell }(A)}{\ell \log k}\text{ and }
\ldimm A=\liminf_{\ell \to\oo}\frac{\log \can_{k,\ell }(A)}{\ell \log k}.
\end{equation}
It is well-known that for any $k$ we obtain the same value and $\dimh A\leq\ldimm A\leq \udimm A$.


Recall Proposition 2.2 of \cite{Fa1}:
\begin{proposition}\label{*profalconer}
Let $F\sse \R^{m}$ and $f:F\to \R^{m}$ be a mapping  such that 
$$|f(x)-f(y)|\leq c|x-y|^{\aaa}\qquad (x,y\in F)$$
for constants $c>0$ and $\aaa>0$.
Then for each $s$
$$\cah^{s/\aaa}f(F)\leq c^{s/\aaa}\cah^{s}(F).$$
\end{proposition}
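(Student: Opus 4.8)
The plan is to push an arbitrary economical cover of $F$ forward through $f$ and to use the Hölder bound to control the diameters of the image pieces. First I would fix $\ddd>0$ and let $\{ U_{j} \}$ be any $\ddd$-cover of $F$. For each $j$ I set $V_{j}=f(F\cap U_{j})$; then $\{ V_{j} \}$ covers $f(F)$, since the $U_{j}$ cover $F$. The Hölder estimate gives, for any two points $x,y\in F\cap U_{j}$, the bound $|f(x)-f(y)|\leq c|x-y|^{\aaa}\leq c(\diam(U_{j}))^{\aaa}$, whence $\diam(V_{j})\leq c(\diam(U_{j}))^{\aaa}$. In particular every $\diam(V_{j})$ is at most $c\ddd^{\aaa}$, so $\{ V_{j} \}$ is a $\ddd'$-cover of $f(F)$ with $\ddd'=c\ddd^{\aaa}$.

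The second step is the elementary computation with the exponent $s/\aaa$. Raising the diameter bound to the power $s/\aaa$ yields
$$\sum_{j}(\diam(V_{j}))^{s/\aaa}\leq \sum_{j}\bigl(c(\diam(U_{j}))^{\aaa}\bigr)^{s/\aaa}=c^{s/\aaa}\sum_{j}(\diam(U_{j}))^{s}.$$
Taking the infimum over all $\ddd$-covers $\{ U_{j} \}$ of $F$ on the right, and recalling that the left-hand side is the sum attached to an admissible $\ddd'$-cover of $f(F)$, I obtain the pre-measure inequality $\cah^{s/\aaa}_{\ddd'}(f(F))\leq c^{s/\aaa}\cah^{s}_{\ddd}(F)$, where $\cah^{t}_{\ddd}$ denotes the usual $\ddd$-restricted infimum from the definition of Hausdorff measure.

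Finally I would let $\ddd\to 0$. As $\ddd\to 0$ we also have $\ddd'=c\ddd^{\aaa}\to 0$, so both pre-measures increase to the corresponding Hausdorff measures, and the inequality passes to the limit to give $\cah^{s/\aaa}(f(F))\leq c^{s/\aaa}\cah^{s}(F)$, as claimed. There is essentially no hard step here; the only points requiring a little care are the bookkeeping of the two mesh scales $\ddd$ and $\ddd'$ (so that shrinking $\ddd$ really does force $\ddd'\to 0$), together with the harmless observations that pieces with $F\cap U_{j}=\ess$ contribute nothing and that the statement is trivial when $\cah^{s}(F)=\oo$.
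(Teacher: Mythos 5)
Your proof is correct and is the standard argument: the paper itself does not prove this proposition but merely recalls it from Falconer's \emph{Fractal Geometry} (Proposition 2.2), and your push-forward of a $\ddd$-cover to a $c\ddd^{\aaa}$-cover, followed by the exponent computation and the limit $\ddd\to 0$, is exactly the proof given there. Nothing further is needed.
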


\section{Results about monotone restrictions}\label{*secmon*}
 
 \begin{theorem}\label{*th4}
 Suppose $0\leq \aaa<1.$
 There exists a dense $G_{\ddd}$ set $\cag \sse \cea$, ($\cag \sse C[0,1]$ when $\aaa=0$) such that 
 if $f\in \cag$, $A\sse [0,1]$ and $f|_{A}$ is monotone then
 $\dimh A\leq \ldimm A\leq \aaa$.
 \end{theorem}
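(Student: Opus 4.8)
The plan is to realize $\cag$ as a countable intersection of open dense sets, each of which forces $f$ to oscillate with the maximal Hölder amplitude at some large scale, and then to convert that oscillation, scale by scale, into an upper bound on the number of $k$-adic intervals that a monotone restriction can meet. Since $\dimh A\le\ldimm A$ is automatic, I only need to bound $\ldimm A$. By the definition (\ref{*minkdi}) of lower Minkowski dimension it suffices to prove the following scalewise statement: for the generic $f$ and for every $\epsilon>0$ there are infinitely many levels $\ell$ with $\can_{k,\ell}(A)\le C\,k^{(\aaa+\epsilon)\ell}$ whenever $f|_{A}$ is monotone; passing to the $\liminf$ then gives $\ldimm A\le\aaa$. (For $\aaa=0$ this is the Kahane--Katznelson/Elekes baseline.)

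\emph{The counting lemma.} Fix such a level $\ell$ and assume, after replacing $f$ by $-f$ if necessary, that $f|_{A}$ is monotone increasing. I would choose one point of $A$ in each occupied $k$-adic interval and order these points $a_{1}<\dots<a_{N}$, where $N=\can_{k,\ell}(A)$; by monotonicity $f(a_{1})\le\dots\le f(a_{N})$, and $f(a_{N})-f(a_{1})\le V:=\mathrm{osc}\,f$, with $V\le 1$ on $\cea$. Partitioning the value range into bands of height $c\,k^{-\aaa\ell}$, monotonicity forces each band to meet a \emph{contiguous} block of the indices, so there are at most $\lceil k^{\aaa\ell}/c\rceil$ blocks. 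If every block contained only boundedly many points we would be done, since then $N\lesssim k^{\aaa\ell}$; the entire difficulty is therefore to control the \emph{flat blocks}, i.e.\ maximal runs of occupied intervals over which $f|_{A}$ rises by less than $c\,k^{-\aaa\ell}$.

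\emph{The generic set and density.} I would let $P_{\ell}(f)$ be the open condition that $f$ exhibits a descending sawtooth of amplitude $\asymp k^{-\aaa\ell}$ at scale $k^{-\ell}$ on each member of a fixed family of coarse intervals, and set $\cag=\bigcap_{n}\bigcup_{\ell\ge n}\{f:P_{\ell}(f)\}$. Each $\bigcup_{\ell\ge n}\{P_{\ell}\}$ is open, the conditions being strict inequalities among finitely many values of $f$, and dense: given $f$ and $\epsilon$, I first use that $\{g:\calla(g)<1\}$ is dense in $\cea$ (replace $f$ by $(1-\eta)f$) to create room inside the unit Hölder ball, then add a single scale-$\ell$ sawtooth of amplitude $\asymp k^{-\aaa\ell}$ with $\ell$ so large that the perturbation has $\|\cdot\|_{0}$-norm $<\epsilon$ and keeps $\calla\le 1$. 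Because only one scale is ever activated, the Hölder seminorm stays bounded; this is precisely why the construction survives only along a sparse sequence of scales, and hence why the conclusion concerns $\ldimm$ (a $\liminf$) rather than $\udimm$.

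\emph{The main obstacle.} Everything hinges on showing that $P_{\ell}(f)$ rules out long flat blocks, and here the Hölder constraint cuts both ways. Over a flat block occupying an $x$-window $J$ of $m$ intervals, $f$ may oscillate by as much as $|J|^{\aaa}\asymp m^{\aaa}k^{-\aaa\ell}$ while $f|_{A}$ stays inside a band of height $c\,k^{-\aaa\ell}$, so $A$ must avoid all the large excursions of $f$ in $J$; estimating the occupation of the sawtooth shows the band-preimage inside $J$ meets only $\asymp m^{1-\aaa}$ of the $m$ intervals, which already forces short blocks when the occupied intervals are not too sparse in $J$. However, a single application of this estimate, combined with the disjointness $\sum_{\mathrm{block}}|J|\le 1$ and the bound $k^{\aaa\ell}$ on the number of blocks, yields only the weaker exponent $\aaa^{2}-\aaa+1$. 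To reach the sharp exponent $\aaa$ one has to iterate the block estimate through the scales of the sawtooth, running a multiscale induction in which each level contributes a branching factor $k^{\aaa}$ in place of $k$. This inductive sharpening, together with the bookkeeping that keeps every perturbation inside the unit Hölder ball, is the technical heart of the argument; the resulting exponent $\aaa$ is forced and optimal, matching the lower bound of Theorem \ref{*th3}.
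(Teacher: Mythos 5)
Your overall architecture (a dense $G_{\ddd}$ set built from open conditions forcing a sawtooth at infinitely many scales, plus a scalewise count of occupied $k$-adic intervals) matches the paper's, but the proof has a genuine gap exactly where you place the ``technical heart'': the passage from the single-scale block estimate, which as you compute yields only the exponent $\aaa^{2}-\aaa+1$, to the sharp exponent $\aaa$. The ``multiscale induction in which each level contributes a branching factor $k^{\aaa}$'' is not carried out, and it is not a routine iteration: to run it you would need a perturbation active at \emph{all} scales between the coarse and fine level simultaneously (a Weierstrass-type sum, not ``a single scale-$\ell$ sawtooth'' as in your density argument), you would need the contiguous-block structure to survive re-partitioning inside each flat block at every intermediate scale, and you would need to keep $\calla\le 1$ for the superposition while still making the perturbation $\|\cdot\|_{0}$-small. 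As written, the density argument and the counting argument require incompatible perturbations (one scale versus all scales), so the proof does not close.

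The paper avoids the flat-block problem entirely by a different sawtooth geometry, and this is the idea your proposal is missing. Instead of a symmetric sawtooth of period $k^{-\ell}$ and amplitude $k^{-\aaa\ell}$, the perturbation $\fff_{m,n}$ has period $2^{-M'}$ with $M'\approx \aaa M+\log_{2}M$ (so the period is roughly $2^{-\aaa M}/M$, much \emph{longer} than the counting scale $2^{-M}$), amplitude $\Xi_{n}^{-1}2^{-\aaa M}$, and is strictly decreasing on all of each period except for a single reset subinterval of length $2^{-M}$; crucially the descending slope exceeds $\Xi_{n}+10$, i.e.\ it beats the Lipschitz bound of the smooth center function $f_{n}$, so $f_{m,n}=f_{n}+\fff_{m,n}$ drops by more than $10\cdot 2^{-M}$ across any two points of the descending part at distance $\ge 2^{-M}$. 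With the ball radius $\ddd_{m,n}=2^{-M-1}$ this directly contradicts $f(p_{2})\ge f(p_{1})$, so $A$ meets at most $3$ intervals of length $2^{-M}$ per period, giving $\can_{2,M}(A)\le 3\cdot 2^{M'}\lesssim M\,2^{\aaa M}$ and hence $\ldimm A\le\aaa$ with no block analysis at all. The exponent $\aaa$ is encoded in the choice of the period, not recovered by induction; the Hölder budget is respected because the reset jump of height $\Xi_{n}^{-1}2^{-\aaa M}$ occurs over a gap of length $2^{-M}$, which has Hölder ratio exactly $\Xi_{n}^{-1}<1-\calla(f_{n})$. I would recommend replacing your counting lemma and the unproved induction by this long-period, asymmetric construction.
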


 \begin{proof}
 We prove the existence of a dense $G_{\ddd}$  set $\cag \sse \cea$,  such that 
 if $f\in \cag$, $A\sse [0,1]$ and $f|_{A}$ is monotone increasing then
 $\ldimm A\leq \aaa$. A similar theorem is valid for monotone decreasing functions
 and the intersection of two dense $G_{\ddd}$ sets is still dense $G_{\ddd}$
 and this yields the theorem.
 
One can easily see that 
 $C^{\oo}$ functions $f$ satisfying
 $\calla(f)<1$  are dense in $\cea$ when $0<\aaa<1$.
 
 One can select a set $\{ {{f}}_{n} \}_{n=1}^{\oo}$ of $C^{\oo}$ functions which is dense in $C_{1}^{\aaa}[0,1]$, (in $C[0,1]$ when $\aaa=0$). 
 We choose 
 \begin{equation}\label{*G2*e}
 \Xi_{n}> n, \ \Xi_{n}\in \N \text{  such that  }
 \end{equation}
 \begin{equation}\label{*G2*a}
 |{{f}}_{n}' (x)|<\Xi_{n} \text{ holds for any }x\in[0,1].
 \end{equation}
 We also suppose that 
 \begin{equation}\label{*G2b*a}
 \calla({{f}}_{n})<1\text{ and }\frac{1}{\Xi_{n}}<1-\calla({{f}}_{n}),
 \end{equation}
 (for $\aaa=0$ we do not need this assumption).
 We put
 \begin{equation}\label{*G7*a}
 M_{m,n}=2{\Xi_{n}(\Xi_{n}+10)(m+n)}\text{ and }M_{m,n}'=\Big \lf M_{m,n}\aaa+\frac{\log M_{m,n}}{\log 2} \Big \rf.
 \end{equation}
 We choose ${M}_{\aaa}$
 such that  if $m\geq {M}_{\aaa}$ then by also recalling \eqref{*G2*e} we have
\begin{equation}\label{*G2*mmn}
M_{m,n}'<M_{m,n}-10 \text{ for any }n.
\end{equation} 
Next we suppose that $m\geq {M}_{\aaa}$.

 Next we define a function $\fff_{m,n}$ periodic by $2^{-M_{m,n}'}$. It is sufficient to define
 this function on $[0,2^{-M_{m,n}'}]$. Set
 \begin{equation}\label{*G2*c}
 \fff_{m,n}(0)=\fff_{m,n}(2^{-M_{m,n}'})=0\text{ and}
 \end{equation}
 \begin{equation}\label{*G8*b}
 \fff_{m,n}(2^{-M_{m,n}'}-2^{-M_{m,n}})=\frac{-1}{\Xi_{n}}2^{-M_{m,n}\aaa}
 \end{equation}
 and we suppose that $\fff_{m,n}$ is linear on the intervals
 $[0,2^{-M_{m,n}'}-2^{-M_{m,n}}]$ and $[2^{-M_{m,n}'}-2^{-M_{m,n}},2^{-M_{m,n}'}]$.

This way for any $x\in (0,2^{-M_{m,n}'}-2^{-M_{m,n}})+j\cdot 2^{-M_{m,n}'},$ $j\in \Z$
\begin{equation}\label{*G8*c}
\fff_{m,n}'(x)=\frac{-2^{-M_{m,n} \aaa}}{\Xi_{n}(2^{-M_{m,n}'}-2^{-M_{m,n}})}<\frac{-2^{-M_{m,n}\aaa}\cdot 2^{M_{m,n}'}}{\Xi_{n}}<
\end{equation}
$$\frac{-2^{-M_{m,n}\aaa +M_{m,n}\aaa+\frac{\log M_{m,n}}{\log 2}-1}}{\Xi_{n}}=-(\Xi_{n} +10)(m+n)<-(\Xi_{n} +10) .$$
Otherwise we always have 
  \begin{equation}\label{*95*b}
  |\fff_{m,n}'(x)|\leq \frac{1}{\Xi_{n}}2^{-M_{m,n}\aaa}\cdot 2^{M_{m,n}}
  \text{ wherever it exists.}
  \end{equation}
 
 We put 
 $
 f_{m,n}(x)={{f}}_{n}(x)+\fff_{m,n}(x).
 $
 
 Next we show that the functions $f_{m,n}$ are in $C_{1}^{\aaa}[0,1]$ when $0<\aaa<1$.
 Using \eqref{*G2*c}, \eqref{*G8*b} and \eqref{*95*b} for any $x,y\in [0,1]$
 \begin{equation}\label{*G9*a}
 \frac{|\fff_{m,n}(x)-\fff_{m,n}(y)|}{|x-y|^{\aaa}}\leq
 \max_{0<|x'-y'|\leq 2^{-M_{m,n}}}\frac{\frac{1}{\Xi_{n}}2^{-M_{m,n}\aaa}\cdot 2^{M_{m,n}}|x'-y'|}{|x'-y'|^{\aaa}}\leq
 \end{equation}
 $$\frac{1}{\Xi_{n}}2^{-M_{m,n}\aaa+M_{m,n}-(1-\aaa)M_{m,n}}=\frac{1}{\Xi_{n}}<1-\calla({{f}}_{n}).$$
 This implies that
 \begin{equation}\label{*G9*b}
 \frac{|f_{m,n}(x)-f_{m,n}(y)|}{|x-y|^{\aaa}}\leq \frac{|{{f}}_{n}(x)-{{f}}_{n}(y)|}{|x-y|^{\aaa}}+
 (1-\calla({{f}}_{n}))\leq 1.
 \end{equation}
 When $\aaa=0$ then it is clear that $f_{m,n}\in C[0,1]$.
 
 By \eqref{*G2*e} and \eqref{*G8*b}
 \begin{equation}\label{*G10*a}
 |f_{m,n}(x)-{{f}}_{n}(x)|=|\fff_{m,n}(x)|\leq \frac{1}{\Xi_{n}}2^{-M_{m,n}\aaa}< \frac{1}{n},\quad \ax x\in[0,1].
 \end{equation}
 Hence, the density of ${{f}}_{n}$ in $C_{1}^{\aaa}[0,1]$ 
 (in $C[0,1]$ when $\aaa=0$)
 implies that of $f_{m,n}$.
 Put
 \begin{equation}\label{*G4*d}
 \ddd_{m,n}=2^{-M_{m,n}-1},\ {\cal G}_{m}=\cup_{n=1}^{\oo}B_{\aaa}(f_{m,n},\ddd_{m,n})
 \text{ and }\cag=\cap_{m={M}_{\aaa}}^{\oo}{\cal G}_{m}.
 \end{equation}
 
 The $\cag_{m}$'s are dense and open in $C_{1}^{\aaa}[0,1]$, (in $C[0,1]$ when $\aaa=0$) and $\cag$ is dense $G_{\ddd}$.

 Suppose $f\in \cag$ and $A\sse[0,1]$ is  such that $f|_{A}$ is monotone increasing.

Then for each $m$ there exists $n(m)$  such that 
$
f\in B_{\aaa}(f_{m,n(m)},\ddd_{m,n(m)}).
$

Suppose that for a $j$   there exist $p_{1}<p_{2}$,
$p_{1},p_{2}\in [0,2^{-M_{m,n(m)}'}-2^{-M_{m,n(m)}}]+j\cdot 2^{-M_{m,n(m)}'}$
with
\begin{equation}\label{*G4*a}
p_{2}-p_{1}\geq 2^{-M_{m,n(m)}}\text{ and }f(p_{2})\geq f(p_{1}).
\end{equation}
Then by \eqref{*G4*d}
\begin{equation}\label{*G5**a}
f_{m,n(m)} (p_{2})\geq f_{m,n(m)}(p_{1})-2\ddd_{m,n(m)} =
f_{m,n(m)}(p_{1})-2^{-M_{m,n(m)}}.
\end{equation}
 On the other hand, by \eqref{*G2*a}, \eqref{*G8*c} and \eqref{*G4*a}
 $$f_{m,n(m)}(p_{2})\leq f_{m,n(m)}(p_{1})-10\cdot 2^{-M_{m,n(m)}},$$
this contradicts \eqref{*G5**a}. Hence $A$ can intersect at most two intervals
of the form $[(j'-1)\cdot 2^{-M_{m,n(m)}},j'\cdot 2^{-M_{m,n(m)}}]$ 
 in an interval $[0,2^{-M_{m,n(m)}'}-2^{-M_{m,n(m)}}]+j\cdot 2^{-M_{m,n(m)}'}.$
 
 Therefore, $A\cap [0,2^{-M_{m,n(m)}'}]+j\cdot 2^{-M_{m,n(m)}'}$ for any $j\in\Z$ can be covered by no more than three
 intervals of the form $[(j'-1)\cdot 2^{-M_{m,n(m)}},j'\cdot 2^{-M_{m,n(m)}}]$.
 Thus, by  \eqref{*G7*a}
 $$\can_{2,M_{m,n(m)}}(A)\leq 3\cdot 2^{M_{m,n(m)}'}\leq 3\cdot 
 2^{M_{m,n(m)}\aaa+\frac{\log(M_{m,n(m)})}{\log 2}}.$$
 Hence,
 $$\liminf_{m\to\oo} \frac{\log \can_{2,M_{m,n(m)}}(A)}{M_{m,n(m)} \log 2}\leq\aaa,$$
 and this implies $\ldimm A\leq \aaa.$
\end{proof}

\begin{theorem}\label{*th3}
Suppose $0<\aaa\leq 1$. If $f\in C^{\aaa}[0,1]$ then  there exists 
$A\sse [0,1]$  such that $f|_{A}$ is monotone and $\dimh A \geq \aaa$.
\end{theorem}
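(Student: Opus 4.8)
The plan is to realize the monotone set as a \emph{record set} of $f$ and then to transfer a dimension lower bound from a genuine interval back to that set via the Hölder image estimate of Proposition \ref{*profalconer}. Concretely, set $M(x)=\sup_{0\le t\le x}f(t)$, the running maximum, which is continuous (because $f$ is) and monotone increasing, with $M(0)=f(0)$ and $M(1)=\max_{[0,1]}f$. Let
$$A=\{x\in[0,1]:f(x)=M(x)\}$$
be the set of points at which $f$ attains a new left record. If $x_1<x_2$ both lie in $A$, then $f(x_1)\le M(x_2)=f(x_2)$, so $f|_A$ is monotone increasing; in particular $M|_A=f|_A$. The idea is that $A$ carries as much of the vertical motion of $f$ as possible, and Hölder continuity then forces $A$ to be horizontally large.

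First I would check that $M$ fills out a full interval on $A$. The complement $[0,1]\setminus A=\{f<M\}$ is open, hence a countable union of components, and on each component $(a,b)$ the function $M$ is constant. Indeed, $a\notin\{f<M\}$ forces $f(a)=M(a)$; and if some point of $(a,b)$ carried a value of $f$ exceeding $M(a)$, then choosing $s\in(a,b)$ maximizing $f$ on $[a,s]$ would give $M(s)=\sup_{[0,s]}f=f(s)$, i.e. $s\in A$, contradicting $s\in\{f<M\}$. Thus $M\equiv M(a)$ on $(a,b)$, so $M(\{f<M\})$ is countable, and $M(A)$ differs from $M([0,1])=[f(0),\max_{[0,1]}f]$ by at most a countable set. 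Therefore $\cah^{1}(M(A))=\max_{[0,1]}f-f(0)$.

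Now I would apply Proposition \ref{*profalconer} to the map $M|_A:A\to\R$, which is $\aaa$-Hölder with constant $C=\calla(f)$, since $|M(x)-M(y)|=|f(x)-f(y)|\le C|x-y|^{\aaa}$ for $x,y\in A$. Taking $s=\aaa$ (so $s/\aaa=1$) yields
$$\cah^{1}(M(A))\le C\,\cah^{\aaa}(A),\qquad\text{hence}\qquad \cah^{\aaa}(A)\ge \frac{\max_{[0,1]}f-f(0)}{C}.$$
If $\max_{[0,1]}f>f(0)$ the right-hand side is positive, so $\cah^{\aaa}(A)>0$ and $\dimh A\ge\aaa$, as wanted. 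To dispose of the remaining case I would run the symmetric argument with the running minimum $m(x)=\inf_{0\le t\le x}f(t)$ and the decreasing record set $A'=\{f=m\}$, which gives $\cah^{\aaa}(A')\ge (f(0)-\min_{[0,1]}f)/C$. Unless $f$ is constant we have $\max_{[0,1]}f>\min_{[0,1]}f$, so at least one of the two differences is strictly positive; and if $f$ is constant we may simply take $A=[0,1]$, for which $\dimh A=1\ge\aaa$.

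The only mildly delicate point is the flatness-on-components claim, i.e. that $M(A)$ fills $[f(0),\max_{[0,1]}f]$ up to a null set; everything else is bookkeeping plus one invocation of Proposition \ref{*profalconer}. I expect no real obstacle, which matches the statement's billing as ``rather easy and implicitly well-known'': the content is just that an $\aaa$-Hölder function which increases by a fixed amount must accomplish this over a set of positive $\cah^{\aaa}$-measure.
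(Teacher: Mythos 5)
Your argument is correct and is essentially the paper's proof: both take the running maximum, let $A$ be its coincidence set with $f$, note that the image of $A$ under this map fills an interval up to a countable set, and conclude via Proposition \ref{*profalconer}; the paper merely pre-selects $[a,b]$ with $f(a)<f(b)$ in place of your symmetric running-minimum fallback, and handles $\aaa=1$ separately. The one slip is that for $\aaa=1$ the Lipschitz constant of $f$ is not $\calla(f)$ (which the paper defines as the oscillation of $f'$) but rather $||f'||_{0}$, which is still finite, so nothing breaks.
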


\begin{proof}
Suppose $0<\aaa<1$. If $f$ is constant then we are done.
If   there exist $a<b$  such that $f(a)<f(b)$ then we can consider the function $g:[a,b]\to\R$  such that $$g(x)=\max\{ f(t):t\in [a,x] \}.$$
Then $g(x)$ is monotone increasing and $C^{\aaa}$. Let $A=\{ x:g(x)=f(x) \}.$ The set $A$ is closed and $g$ is constant on the intervals contiguous to $A$ and $g(A)=[f(a),f(b)]$. If we had $\dimh A<\aaa$ then 
by Proposition \ref{*profalconer} we would obtain $\dimh g(A)<1$,
a contradiction.
If $\aaa=1$ then $f'\in C[0,1]$ and is either identically zero or there is an interval where it is of constant sign.
\end{proof}

 The next theorem is from \cite[Corollary 1.4 and Proposition 1.5]{[ABMP]}
 \begin{theorem}\label{*th5a*}
 Let $0<\aaa<1$
and let $\ds \{ B(t):t\in[0,1] \}$  be a fractional Brownian motion of Hurst index
$\aaa$. Then almost surely for any $A$ with $\udimm A>\max\{ 1-\aaa,\aaa \}$, $B|_{A}$ is not monotone increasing.
If $\car=\{ t\in[0,1] : B(t)=\max_{s\in[0,t]}B(s) \}$
the set of record times of $B$ then almost surely $\dimh\car=\udimm \car=\aaa$.
 \end{theorem}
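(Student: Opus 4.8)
The plan is to establish the two assertions separately, treating the statement about the record set $\car$ first, since it is the engine behind the $\aaa$ half of the monotonicity claim. Throughout I would lean on the two structural features of fractional Brownian motion: \emph{self-similarity} $B(ct)\overset{d}{=}c^{\aaa}B(t)$ together with \emph{stationarity of the increments}, and the almost sure $\bbb$-Hölder continuity of $B$ for every $\bbb<\aaa$ (so that Proposition \ref{*profalconer} applies). Two exponents will recur: $\aaa$, governing the record/graph structure, and $1-\aaa$, the a.s. dimension of the level sets of $B$. The number $\max\{1-\aaa,\aaa\}$ is precisely the larger of these two competing mechanisms.

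For the record set write $M(t)=\max_{s\le t}B(s)$, so $\car=\{t:B(t)=M(t)\}$ is closed and carries the Stieltjes measure $dM$. For the \emph{upper bound} I would estimate the expected box count: the dyadic interval $I_{j}=[(j-1)\ddd,j\ddd]$ meets $\car$ exactly when $\max_{I_{j}}B\ge\max_{[0,(j-1)\ddd]}B$, i.e. when $\mathrm{argmax}_{[0,j\ddd]}B$ lies in the last window, an event whose probability by self-similarity equals $P(\mathrm{argmax}_{[0,j]}B\in[j-1,j])$. By Molchan's persistence estimate for fBm this decays like $j^{-(1-\aaa)}$, and summing over $1\le j\le\ddd^{-1}$ gives $E[\can_{2,\ell}(\car)]\le C\ddd^{-\aaa}$ with $\ddd=2^{-\ell}$; a Markov inequality plus Borel--Cantelli along $\ddd=2^{-\ell}$ then yields $\udimm\car\le\aaa$ a.s. For the \emph{lower bound} I would run a Frostman/energy argument on $\mu=dM$: it suffices to show that for every $s<\aaa$ the expected $s$-energy $E\iint|x-y|^{-s}\,dM(x)\,dM(y)$ is finite, which reduces to a two-point estimate bounding $E[dM(x)\,dM(y)]$ — the probability of setting new records near both $x$ and $y$ — again via self-similarity and a two-point persistence bound. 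Together these give $\dimh\car=\udimm\car=\aaa$ almost surely.

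For the monotonicity assertion I would show that, almost surely and uniformly in $A$, if $B|_{A}$ is monotone increasing then $\udimm A\le\max\{1-\aaa,\aaa\}$. Fix a scale $\ddd=2^{-\ell}$ and order the intervals meeting $A$ by position; monotonicity forces the associated $B$-values to be non-decreasing, while Hölder continuity caps the oscillation on each interval by $\ddd^{\aaa-o(1)}$. Grouping the intervals into maximal blocks on which the $B$-value stays within a window of width $\ddd^{\aaa}$, the number of blocks is at most $(\mathrm{osc}\,B)\,\ddd^{-\aaa}$ (the record, or $\aaa$, mechanism), while the intervals inside one block lie in a $\ddd^{\aaa}$-neighbourhood of a single level and so are governed by the level-set count $\ddd^{-(1-\aaa)}$ (the $1-\aaa$ mechanism). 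The substance of the estimate is that these two costs do not simply multiply: whichever regime dominates, the total count is $\ddd^{-\max\{1-\aaa,\aaa\}-o(1)}$.

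Two points carry the real weight. First, because fBm is neither Markov nor a semimartingale for $\aaa\ne\tfrac12$, there is no reflection principle, so both the one-point and two-point persistence/argmax-location estimates (the decay exponent $1-\aaa$ and the energy two-point bound) must come from Gaussian tools — Molchan's theorem, Slepian-type correlation inequalities — rather than from an explicit arcsine law; this is the main technical obstacle, and it is also where the exponent $1-\aaa$ genuinely enters. Second, the conclusion must hold simultaneously for \emph{all} $A$, of which there are uncountably many, so I would recast the monotone estimate as an a.s. bound on the longest ``increasing chain'' of $\ddd$-intervals of the fixed sample path $B$, and then pass to the supremum over $A$ by a covering/union-bound argument over the finitely many chains available at each scale, so that no particular $A$ need ever be fixed in advance.
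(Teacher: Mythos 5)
First, a point of reference: the paper does not prove this statement at all — it is quoted verbatim from \cite[Corollary 1.4 and Proposition 1.5]{[ABMP]} — so your proposal can only be judged against that source and on its own internal logic. As a sketch it correctly identifies the two competing mechanisms (records for the exponent $\aaa$, level sets for $1-\aaa$), but the step that actually carries the monotonicity half is missing. You partition the $\ddd$-grid intervals meeting $A$ into at most $\mathrm{osc}(B)\,\ddd^{-\aaa}$ blocks of nearly-constant $B$-value and bound each block by the approximate-level-set count $\ddd^{-(1-\aaa)}$; the naive combination of these two bounds is their product $\ddd^{-1}$, which says nothing. You acknowledge that ``these two costs do not simply multiply,'' but that non-multiplication \emph{is} the theorem, and no mechanism for it is offered — note that at $\aaa=1/2$ the claimed bound is $1/2$ while the product of your two counts is $\ddd^{-1}$, so the gap is widest precisely in the most classical case. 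Moreover, the within-block bound needs an almost sure estimate holding \emph{simultaneously for every level $c$ and every scale $\ddd$} on the number of $\ddd$-intervals where $B$ enters $[c-\ddd^{\aaa},c+\ddd^{\aaa}]$; this is a uniform occupation-measure/local-time estimate for fBm that you do not supply. The closing ``union bound over the finitely many chains available at each scale'' also cannot work as stated: the number of chains of grid intervals is exponential in $\ddd^{-1}$, so the bound must be organized as a single pathwise functional (e.g.\ the longest increasing chain), not as a union over candidate sets $A$.

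For the record set, your upper bound via the argmax-location (persistence) estimate, Markov's inequality and Borel--Cantelli along $\ddd=2^{-\ell}$ is essentially sound, modulo correctly importing Molchan's exponent and a time-reversal step to move the argmax to the last window. The lower bound via an energy integral for $dM$, however, rests on a two-point record-correlation estimate that you only name; that is the hardest analytic input of that route and is not an off-the-shelf fact for fBm. It is also unnecessary: since $B\in C^{\bbb}[0,1]$ a.s.\ for every $\bbb<\aaa$, the running maximum $M$ is $\bbb$-H\"older, constant off $\car$, and maps $\car$ onto the nondegenerate interval $[0,M(1)]$, so Proposition \ref{*profalconer} gives $\dimh\car\geq\bbb$ for every $\bbb<\aaa$ — this is exactly the argument of Theorem \ref{*th3} of the present paper applied to the sample path. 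In short, the record-set half can be closed cheaply, but the monotonicity half, as written, does not close.
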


 The functions $B(t)$ in the above theorem belong to $C^{\aaa-}[0,1]\sm C^{\aaa}[0,1]$.
Since $\max\{ 1-\aaa,\aaa \}\geq 1/2$ one might wonder whether for 
$\aaa<1/2$ one can obtain better estimates on the upper Minkowski dimesion of sets where $C^{\aaa}$ functions are monotone. This is not the case, since an unpublished result of A. M\'ath\'e shows that for any
function $f:[0,1]\to \R$ one can always find a set $A$ such that 
$f|_{A}$ is monotone and $\udimm A\geq 1/2 $.

\section{Results about convex restrictions}\label{*seccvx*}

 
 \begin{lemma}\label{*lemB4*}
 Suppose that $a<b$, $\xi_{0}''\geq\xi_{0}>0$,  $f_{0}:[a,b]\to\R$,
 \begin{equation}\label{*B4*e}
 f_{0}''(x)=\xi_{0}''\geq \xi_{0}>0 \text{ for all }x\in [a,b]
 \end{equation}
 and $f\in B_{0}(f_{0},\ddd_{0})$. Let
 \begin{equation}\label{*B4*b}
 E_{f}\text{ denote the convex hull of }\{ (x,y): x\in [a,b], \ y\geq f(x) \}
 \end{equation}
and put
\begin{equation}\label{*B4*c}
g(x)=\min\{ y:(x,y)\in E_{f} \}\text{ for }x\in [a,b].
\end{equation}
Set
\begin{equation}\label{*B4*d}
A=\{ x\in [a,b]:f(x)=g(x) \}.
\end{equation}
Suppose $x_{1},x_{2}\in A$ satisfy $x_{1}<x_{2}$ and $(x_{1},x_{2})\cap A=\ess$.
Then
\begin{equation}\label{*B4*a}
x_{2}-x_{1}<4\sqrt{\frac{\ddd_{0}}{\xi_{0}}}.
\end{equation}
 \end{lemma}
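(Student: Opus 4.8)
The plan is to combine two facts that pin down the gap $(x_1,x_2)$: that $f$ is uniformly within $\ddd_0$ of the strictly convex model $f_0$, and that the convex envelope $g$ is \emph{affine} across any interval contiguous to the contact set $A$. First I would record the one-dimensional structural fact. Since $E_f$ is the convex hull of the epigraph of the continuous function $f$ and $g$ is its lower boundary, $g$ is the largest convex minorant of $f$ on $[a,b]$. For any $x_0\in(x_1,x_2)$ we have $x_0\notin A$, so $f(x_0)>g(x_0)$, and hence the point $(x_0,g(x_0))$ is not an extreme point of $E_f$ (extreme points of the lower boundary satisfy $g=f$). Therefore $(x_0,g(x_0))$ lies in the relative interior of a line segment contained in the lower boundary of $E_f$, whose endpoints are contact points $(t_-,f(t_-))$, $(t_+,f(t_+))$ with $t_-,t_+\in A$. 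Because $(x_1,x_2)\cap A=\ess$ while $x_1,x_2\in A$, these endpoints satisfy $t_-\le x_1<x_0<x_2\le t_+$, so $g$ is affine on $[t_-,t_+]\supseteq[x_1,x_2]$. Writing $L$ for this affine piece, we get $g(x)=L(x)$ on $[x_1,x_2]$ with $L(x_1)=f(x_1)$ and $L(x_2)=f(x_2)$.

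Next I would evaluate at the midpoint $m=\tfrac{x_1+x_2}{2}$. Since $f\ge g=L$ on $[x_1,x_2]$, we have $f(m)\ge L(m)=\tfrac{f(x_1)+f(x_2)}{2}$. Writing $f=f_0+e$, where $||e||_0<\ddd_0$ is exactly the meaning of $f\in B_0(f_0,\ddd_0)$, this inequality rearranges to
$$\frac{f_0(x_1)+f_0(x_2)}{2}-f_0(m)\le e(m)-\frac{e(x_1)+e(x_2)}{2}<2\ddd_0 .$$
The left-hand side is the midpoint convexity gap of $f_0$, and since $f_0''\equiv\xi_0''\ge\xi_0$ a direct quadratic computation (or Taylor's theorem with the lower bound $f_0''\ge\xi_0$) gives
$$\frac{f_0(x_1)+f_0(x_2)}{2}-f_0(m)=\frac{\xi_0''}{8}(x_2-x_1)^2\ge\frac{\xi_0}{8}(x_2-x_1)^2 .$$

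Combining the two displays yields $\tfrac{\xi_0}{8}(x_2-x_1)^2<2\ddd_0$, that is $(x_2-x_1)^2<16\ddd_0/\xi_0$, which is precisely the asserted bound $x_2-x_1<4\sqrt{\ddd_0/\xi_0}$. I expect the only nonroutine step to be the structural claim that $g$ is affine on the gap; this is where both hypotheses $x_1,x_2\in A$ and $(x_1,x_2)\cap A=\ess$ enter, and I would take care to justify that a segment of the lower boundary through an interior point of the gap must terminate at contact points lying outside $(x_1,x_2)$. Everything after that is a single estimate. I would also note that the role of $f_0$ having constant second derivative is only to make the midpoint gap explicit; solely the lower bound $f_0''\ge\xi_0$ is used, which is what produces $\xi_0$ rather than $\xi_0''$ in the final inequality, and the strictness of $||e||_0<\ddd_0$ accounts for the strict inequality in \eqref{*B4*a}.
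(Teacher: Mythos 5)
Your proposal is correct and follows essentially the same route as the paper: both arguments rest on the linearity of $g$ across the contiguous interval $(x_1,x_2)$ and on comparing the midpoint value with the quadratic midpoint-convexity defect $\frac{\xi_0''}{8}(x_2-x_1)^2$ of $f_0$, yielding the identical bound $(x_2-x_1)^2<16\ddd_0/\xi_0$. Your justification of the affineness of $g$ on the gap via extreme points of $E_f$ is in fact more detailed than the paper's, which simply asserts it.
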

 
 \begin{proof}
 Since $(x_{1},x_{2})\cap A=\ess$ we have
 \begin{equation}\label{*B5*a}
 g(x)=g(x_{1})+\frac{g(x_{2})-g(x_{1})}{x_{2}-x_{1}}(x-x_{1})\text{ for any }
 x\in(x_{1},x_{2}).
 \end{equation}
 
 Denote by $E_{f_{0}-\ddd_{0}}$ the convex hull of
 $\ds \{ (x,y):x\in [a,b],\ y\geq f_{0}(x)-\ddd_{0} \}$.

 Since $f_{0}-\ddd_{0}$ is convex $f_{0}(x)-\ddd_{0}=\min\{ y:(x,y)\in E_{f_{0}-\ddd_{0}} \}.$ It is also clear that $E_{f}\sse E_{f_{0}-\ddd_{0}}$
 and
 $g(x)>f_{0}(x)-\ddd_{0}$ holds on $[a,b]$.
 Since $f_{0}(x)+\ddd_{0}>f(x)\geq g(x)$ we also
 have $g(x)<f_{0}(x)+\ddd_{0}$.
 
 Put $x_{3}=\frac{x_{1}+x_{2}}{2}.$ Using \eqref{*B4*e} 
 $$f_{0}'(x_{3})=\frac{f_{0}(x_{2})-f_{0}(x_{1})}{x_{2}-x_{1}}
 \text{ and }f_{0}(x_{1})=f_{0}(x_{3})+f_{0}'(x_{3})(x_{1}-x_{3})+
 \frac{\xi_{0}''}{2}(x_{1}-x_{3})^{2}.$$
 Substituting the value of $f_{0}'(x_{3})$ in the last equation
 and rearranging we obtain
 \begin{equation}\label{*B5*b}
 f_{0}(x_{1})+\frac{f_{0}(x_{2})-f_{0}(x_{1})}{x_{2}-x_{1}}(x_{3}-x_{1})-f_{0}(x_{3})=
 \frac{f_{0}(x_{1})+f_{0}(x_{2})}{2}-f_{0}(x_{3})=
 \end{equation}
 $$\frac{\xi_{0}''}{2}(x_{3}-x_{1})^{2}.
 $$
 By \eqref{*B5*a} we have
 \begin{equation}\label{*B5*c}
 g(x_{3})=\frac{g(x_{1})+g(x_{2})}{2}.
 \end{equation}
 Using that $|f_{0}-g|<\ddd_{0}$ we deduce from \eqref{*B5*b}
 and \eqref{*B5*a} that
 $$\ddd_{0}>|f_{0}(x_{3})-g(x_{3})|=\Big |\frac{f_{0}(x_{1})+f_{0}(x_{2})}{2}
-\frac{\xi_{0}''}{2}(x_{3}-x_{1})^{2}-\frac{g(x_{1})+g(x_{2})}{2} \Big |\geq$$
$$ \frac{\xi_{0}''}{2}(x_{3}-x_{1})^{2}-\ddd_{0}.$$
 Hence,
 $$\frac{x_{2}-x_{1}}{2}=x_{3}-x_{1}<\sqrt{\frac{4\ddd_{0}}{\xi_{0}''}}$$
 which implies \eqref{*B4*a}.
 \end{proof}

 \begin{theorem}\label{*th11}
 Suppose $0\leq \aaa<2$. There exists a dense $G_{\ddd}$ set
 $\cag$ in $\cea$  such that for any $f\in \cag$ there exists
 a closed set $A\sse [0,1]$  on which $f|_A$ is convex and $\udimm A=1$.
 (When $\aaa=0$, or $1$ then we can use $C^{\aaa}[0,1]$
 instead of $\cea$.)
 \end{theorem}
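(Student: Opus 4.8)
The plan is to fix once and for all a subinterval $[a,b]\sse[0,1]$ and to take $A$ to be the contact set of $f$ with its lower convex envelope on $[a,b]$. Writing $g_{f}$ for the largest convex minorant of $f$ on $[a,b]$ (as in \eqref{*B4*c}) and $A=A_{f}=\{x\in[a,b]:f(x)=g_{f}(x)\}$, the set $A$ is automatically closed and $f|_{A}=g_{f}|_{A}$ is convex, so the entire task is to produce a dense $G_{\ddd}$ set on which $\udimm A_{f}=1$. Since $\udimm$ is a $\limsup$, it suffices to force, for each $f$ in the residual set and for a sequence of scales tending to $0$, the set $A_{f}$ to be almost space–filling inside a (shrinking) subinterval; I will never need $A_f$ to be thick at \emph{every} scale, which is crucial because a set of upper box dimension $1$ need not be dense.

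The building block is Lemma \ref{*lemB4*}: if on an interval $J$ the function $f$ lies within $\ddd_{0}$ of a parabola of curvature $\xi_{0}$, then the gaps of its contact set on $J$ are $<4\sqrt{\ddd_{0}/\xi_{0}}$. I would exploit this at the global minimum of $f$. Starting from a dense sequence $\{f_{n}\}$ of smooth functions with $\calla(f_{n})<1$, at stage $m$ I replace $f_{n}$, on a short interval $J_{m,n}$ of length $\ell_{m,n}$ centred at an interior near–minimum $x_{n}^{\ast}$ of $f_{n}$, by a parabola of the maximal admissible curvature $\xi_{m,n}\sim\ell_{m,n}^{\,\aaa-2}$, glued to $f_{n}$ (in a $C^{\lf\aaa\rf}$–fashion, and after slightly steepening $f_n$ just outside $J_{m,n}$) and sitting marginally below $f_{n}$ so that $J_{m,n}$ remains the global minimum. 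Because $\xi_{m,n}$ is the largest curvature compatible with $\calla\le1$ and $\ell_{m,n}\to0$, the change $\varphi_{m,n}:=f_{m,n}-f_{n}$ is small in $C^{\lf\aaa\rf}$ (of size $\ell_{m,n}^{\,\aaa-1}$), so the $f_{m,n}$ remain dense; putting $\cag_{m}=\cup_{n}B_{\aaa}(f_{m,n},\ddd_{m,n})$ and $\cag=\cap_{m}\cag_{m}$ gives the required dense $G_{\ddd}$ once the radii $\ddd_{m,n}$ are small. The reason for letting $J_{m,n}$ \emph{shrink} (rather than keeping it fixed and sending $\ddd_{m,n}\to0$) is that a fixed interval on which $f$ were arbitrarily close to a parabola would force $f$ to be genuinely convex there, which a residual function cannot be; with $\ell_{m,n}\to0$ one only forces $f$ to be second–order convex at the single accumulation point $x^{\ast}$ of the minima.

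For $f\in\cag$ fix $m$ and pick $n=n(m)$ with $f\in B_{\aaa}(f_{m,n},\ddd_{m,n})$. I would choose $\ddd_{m,n}$ so small that
$$\eta_{m,n}:=4\sqrt{\ddd_{m,n}/\xi_{m,n}}=\ell_{m,n}^{\,m},$$
so that Lemma \ref{*lemB4*}, applied on $J_{m,n}$ with $f_{0}=f_{m,n}|_{J_{m,n}}$, yields a contact set on $J_{m,n}$ with all gaps $<\eta_{m,n}\ll\ell_{m,n}$. The main step is to upgrade this \emph{local} statement to the \emph{global} envelope $g_{f}$ on the fixed $[a,b]$. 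Since $J_{m,n}$ realises the global minimum, the tangent lines to the parabola issued from its central half have small slope and, together with the steepened walls, stay below $f$ on all of $[a,b]$; hence $g_{f}$ coincides with the local parabolic envelope on (at least) the middle half of $J_{m,n}$, and $A_{f}$ is $\eta_{m,n}$–dense there. I expect this tangent–line comparison to be the delicate point, and it is exactly here that the robustness of Lemma \ref{*lemB4*} is used: one must also check that the deeper wells dug at later stages $m'>m$ — which lie within $\ddd_{m,n}$ of $f_{m,n}$ and have depth $\sim\ell_{m',n'}^{\,\aaa}\ll\ddd_{m,n}$ — perturb the bottom by less than the margin afforded by the curvature $\xi_{m,n}$, so they cannot pull $g_{f}$ off the parabola except on a subinterval of $J_{m,n}$ of length $o(\ell_{m,n})$.

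Finally I would count. The $\eta_{m,n}$–density on the middle half of $J_{m,n}$ means that at least $\tfrac12\,\ell_{m,n}/\eta_{m,n}$ dyadic intervals of length $\asymp\eta_{m,n}$ meet $A_{f}$, whence, writing $N(\eta)$ for that number,
$$\frac{\log N(\eta_{m,n})}{\log(1/\eta_{m,n})}\ \ge\ 1-\frac{\log(2/\ell_{m,n})}{\log(1/\eta_{m,n})}\ =\ 1-\frac1m\ \longrightarrow\ 1\qquad(m\to\oo),$$
by the choice $\eta_{m,n}=\ell_{m,n}^{\,m}$. Taking the $\limsup$ over $m$ gives $\udimm A_{f}=1$, while $f|_{A_{f}}=g_{f}|_{A_{f}}$ is convex by construction. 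The cases $\aaa\in\{0,1\}$ are handled identically in $C^{\aaa}[0,1]$: for $\aaa=0$ there is no Hölder constraint, so the parabola may be given any curvature, while for $1<\aaa<2$ the only extra care is that the gluing at $\partial J_{m,n}$ be performed in $C^{1}$ so that $f_{m,n}\in\cea$.
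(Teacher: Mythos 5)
Your skeleton (contact set with the lower convex envelope, Lemma \ref{*lemB4*} to bound the gaps, counting at a sparse sequence of scales) is the right one, but the load-bearing step — that the \emph{global} envelope $g_f$ on the fixed $[a,b]$ coincides with the local parabolic envelope on the \emph{middle half} of $J_{m,n}$ — is not only unproven, it is false for your parameter choices. If $g_f$ agreed with a parabola of curvature $\xi_{m,n}\sim \ell_{m,n}^{\aaa-2}$ at a point $x$ with $|x-x^{\ast}|\sim \ell_{m,n}/4$, the supporting line of $g_f$ at $x$ would have slope $\sim \xi_{m,n}\ell_{m,n}\sim \ell_{m,n}^{\aaa-1}$; for $\aaa\le 1$ such a line rises above $\sup_{[a,b]}f$ within horizontal distance $\ell_{m,n}^{1-\aaa}\to 0$ of $x$, so it cannot lie below $f$ on a fixed interval $[a,b]$ — no amount of ``steepening the walls'' fixes this, since the well of width $\ell_{m,n}$ in a function of $C_1^{\aaa}$-type has depth at most of order $\ell_{m,n}^{\aaa}\ll \ell_{m,n}^{\aaa-1}\cdot(b-a)$. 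Even for $1<\aaa<2$ a direct computation (tangent line at $x^{\ast}+t$ versus a nondegenerate minimum $f_n(x^{\ast}+s)\approx v+Cs^{2}$ outside $J_{m,n}$) shows the envelope detaches from the parabola outside a central window of length only $\sim \ell_{m,n}^{2-\aaa/2}=o(\ell_{m,n})$. Your final count, which needs gap bound $\eta_{m,n}=\ell_{m,n}^{m}$ to hold across a set of length comparable to $\ell_{m,n}$, therefore rests on a claim that fails; and Lemma \ref{*lemB4*} itself is only applicable to $g_f$ on a subinterval $[a',b']$ once you know its endpoints lie in the global contact set, which is exactly what is missing.

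The paper resolves this localization entirely differently: it never digs wells. The centers $f_n$ are fixed functions with piecewise linear, nowhere locally constant derivative and a point of positive curvature; only the radii $\ddd_{m,n}$ shrink with $m$. The set $A$ is the contact set on an interval $[a_1,b_1]$ where $f_{n(1)}''$ is a positive constant, and for each $m$ the chain of estimates \eqref{*B7*b}--\eqref{*B14*b} (comparing $g'_{\pm}$ with $g_m'$ at scale $\DDD_{m,n}$) produces a subinterval $[a_{4,m},b_{4,m}]$ of length at least $d_{n(m)}/(2m\Xi_{n(m)})$ whose endpoints already belong to $A$ and on which $f_{n(m)}''$ is a positive constant; the global and local envelopes then agree there for free, Lemma \ref{*lemB4*} applies with $\ddd_0=\ddd_{m,n(m)}$ chosen tiny relative to that length via \eqref{*63b*}, and the count goes through. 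Note that maximal curvature is not needed anywhere: any definite positive curvature on an interval of definite (slowly shrinking) length suffices, because $\ddd_{m,n}$ may be taken as small as one likes afterwards. If you want to salvage your construction, the part you must actually prove is a quantitative statement of the form ``there exist $a',b'\in A$ with $b'-a'$ not too small compared with $\ddd_{m,n}$ such that $f$ is $\ddd_{m,n}$-close on $[a',b']$ to a uniformly convex function'' — which is, in substance, the paper's argument.
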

 
 \begin{proof}
The strategy of our proof is the following. 
We obtain dense open sets of functions ${\cal G}_{m}$ defined
in \eqref{*B2*b}. In the definition of these open sets
balls centered at functions $f_{n}$ and of radius $\ddd_{m,n}$ 
are used.
These functions $f_{n}$ will have piecewise linear, locally non-constant
first derivatives and there will be a point 
${\widetilde{x}}_{n}\in (0,1)$
 such that  $f_{n}''({\widetilde{x}}_{n})$ is positive. 
 This means that on suitably small subintervals, first on $[a_{1},b_{1}]$,
 later on $[a_{4,m},b_{4,m}]$
Lemma \ref{*lemB4*} will be applicable and the choice of  
$\ddd_{m,n}$ can be made in \eqref{*B3*b} in  a way that by
Lemma \ref{*lemB4*} if we take $f\in \cap_{m} B_{\aaa}(f_{n(m)},\ddd_{m,n(m)})$ and define auxiliary functions by taking 
boundaries of convex hulls, as in  \eqref{*B4*b} an \eqref{*B4*c}
we can obtain sets $A$ with very small gaps (see \eqref{*B7*a} and \eqref{*B14*c}) on which $f$ is convex and this will yield our estimate
about the upper Minkowski dimension.
There is an additional technical difficulty arising from the fact
that we need to make sure that the small intervals $[a_{4,m},b_{4,m}]$
are selected in a way that the boundary of the convex hull defined by the restriction
of $f$ onto $[a_{4,m},b_{4,m}]$ will be on the boundary  of the convex hull defined by the restriction
of $f$ onto $[a_{1},b_{1}]$ see the remark after \eqref{*B14*b}.

Next we turn to details of the proof.
 Suppose $0\leq \aaa<2$ is fixed. (When $\aaa=0$, or $1$ then we use
 $C^{\aaa}[0,1]$
 instead of $\cea$ in the next proof and we do not need to make any assumptions about $\calla$.)
 
 Given any $\eee>0$ and $f\in \cea$ one can find ${\overline{f}}\in
 C^{1,1}[0,1]$  such that  ${\overline{f}}'$ is piecewise linear,  $\calla({\overline{f}})<1$ and ${\overline{f}}\in B_{\aaa}(f,\eee)$.
 Select $x_{0}\in (0,1)$ and $\ddd_{0}>0$
  such that ${\overline{f}}'$ is linear on $[x_{0}-\ddd_{0},x_{0}+\ddd_{0}]\sse (0,1)$ and hence ${\overline{f}}''(x_{0})$ exists.
  
  Denote by $\fff_{c}$ the function which equals $c$ if $x\not\in [x_{0}-c,x_{0}+c]$ and $2|{\overline{f}}''(x_{0})|$ if $x\in [x_{0}-c, x_{0}+c]$.
  Set $\FFF_{c}(x)=\int_{0}^{x}\int_{0}^{t}\fff_{c}(u)du \, dt$.
  Then $\FFF_{c}'$ is piecewise linear.
  Moreover, 
  \begin{equation}\label{*fffc*}
  \lim_{c\to0+}||\FFF_{c}||_{0}= \lim_{c\to0+}||\FFF_{c}'||_{0}= \lim_{c\to0+}\calla(\FFF_{c})=0.
  \end{equation}

 Hence one can choose a sufficiently small $c>0$  such that 
 $\widetilde{f}={\overline{f}}+\FFF_{c}$ has piecewise linear derivative, 
 there is no interval on which $\widetilde{f}'$
is constant, $\widetilde{f}''(x_{0})>0$ and $\widetilde{f}\in B_{\aaa}(f,\eee).$

By the above remarks one can  select a set $\{ f_{n}:n=1,... \}$
 which is dense in $\cea$ and consists of $C^{1}[0,1]$
 functions  such that  $f_{n}'$ is piecewise linear, there is no interval on which
 $f_{n}'$ is constant and for any $n$
 \begin{equation}\label{*B2*a}
 \text{  there exists  }{\widetilde{x}}_{n}\in (0,1)
\text{ such that } f_{n}''({\widetilde{x}}_{n})>0. 
 \end{equation}
For a given $m\in\N$ first we define dense open sets ${\cal G}_{m}$
in $C^{\aaa}_{1}[0,1]$.

Suppose that $m$ is fixed. We want to select $\ddd_{m,n}>0$ and define
\begin{equation}\label{*B2*b}
{\cal G}_{m}=\bigcup_{n}B_{\aaa}(f_{n},\ddd_{m,n})\text{ and }\cag=\bigcap_{m=1}^{\oo}{\cal G}_{m}.
\end{equation}
 Then, clearly the sets ${\cal G}_{m}$ are dense and open, and $\cag$ is dense $G_{\ddd}$.

 Since $f_{n}'$ is piecewise linear and there is no interval on which it is constant we can select
 $0<\xi_{n}<1<\Xi_{n}$  such that 
 \begin{equation}\label{*B3*a}
 \xi_{n}<|f_{n}''(x)|<\Xi_{n}
 \end{equation}
 at any $x\in [0,1]$ where $f_n'$ is locally linear.
 We can also suppose that any maximal interval on which
 $f_{n}'$ is linear is of length at least  $d_{n}<1/n$.
 
 We need to select $\ddd_{m,n}>0$ for any $m,n\in \N$. First we suppose that
 \begin{equation}\label{*B3*b}
 \ddd_{m,n}\leq \frac{\xi_{n}\cdot d_{n}^{2}}{m\cdot 100^{2}}.
 \end{equation}
 Later we also need to assume \eqref{*B10*b} and \eqref{*63b*}.
 
 Assume $f\in\cag$. Then there exists a sequence $n(m)$
 such that $f\in B_{\aaa}(f_{n(m)},\ddd_{m,n(m)}).$
By property \eqref{*B2*a}  there exists an interval $[a_{1},b_{1}]\sse [0,1]$
 such that $b_{1}-a_{1}\geq d_{n(1)}$ and
\begin{equation}\label{*B6*a}
\xi_{n(1)}''\defeq f_{n(1)}''(x)>\xi_{n(1)}\text{ for any }x\in (a_{1},b_{1}).
\end{equation} 
We define $E_{f} $, $g$ and $A$ as in 
\eqref{*B4*b}, \eqref{*B4*c} and \eqref{*B4*d} in Lemma
\ref{*lemB4*} using $[a_{1},b_{1}]$ instead of $[a,b]$. 
Then $g$ is convex on $[a_{1},b_{1}]$ and $f|_{A}=g|_{A}$
is also convex. We need to show that $\udimm A=1$.

Since the points $(a_{1},f(a_{1}))$ and $(b_{1},f(b_{1}))$
are extremal points of $E_{f} $ we have $a_{1},b_{1}\in A$.
Next we apply Lemma \ref{*lemB4*} with
$a=a_{1}$, $b=b_{1}$, $\xi_{0}=\xi_{n(1)}$, $\xi_{0}''=\xi_{n(1)}''$,
$f_{0}=f_{n(1)}$, $\ddd_{0}=\ddd_{1,n(1)}$ and $f=f$.

We infer that if $a_{1}\leq x_{1}<x_{2}\leq b_{1}$ and $(x_{1},x_{2})\cap A=\ess$
then by \eqref{*B4*a} and \eqref{*B3*b}
\begin{equation}\label{*B7*a}
x_{2}-x_{1}< 4\sqrt{\frac{\ddd_{1,n(1)}}{\xi_{n(1)}}}\leq 4\frac{d_{n(1)}}{100}\leq
\frac{b_{1}-a_{1}}{25}.
\end{equation}

We denote by $E_{f,m}$ the convex hull of $\ds \{ (x,y):x\in [a_{1},b_{1}],\  y\geq f_{n(m)}(x) \}$ and $g_{m}(x)=\min\{ y:(x,y)\in  E_{f,m} \}$ for $x\in[a_{1},b_{1}]$
and $A_{m}=\{ x\in [a_{1},b_{1}]:f_{n(m)}(x)=g_{m}(x) \}.$ 

Since $f\in B_{\aaa}(f_{n(m)},\ddd_{m,n(m)})$ we have
\begin{equation}\label{*B7*b}
E_{f,m}+(0,\ddd_{m,n(m)})\sse E_{f}  \sse E_{f,m}-(0,\ddd_{m,n(m)})
\end{equation}
and hence
\begin{equation}\label{*B7*c}
g_{m}(x)-\ddd_{m,n(m)}\leq g(x)\leq g_{m}(x)+\ddd_{m,n(m)}\text{ for }x\in [a_{1},b_{1}].
\end{equation}
Since $f_{n(m)}'$ is piecewise linear, one can easily see that $g_{m}\in C^{1}[a_{1},b_{1}]$ and at points $x\in A_{m}$ we have
$f_{n(m)}'(x)=g_{m}'(x) $ while $g_{m}$ is locally linear at any $x\in (a_{1},b_{1})
\sm A_{m}$. 
Indeed, $g_{m}$ is convex and at $x\in A_{m}$ the one-sided derivatives 
 $g'_{m,-}(x)$ and $g'_{m,+}(x)$ should both coincide
with $f_{n(m)}'(x)$. Hence at accumulation points of
$A_{m}$ the restriction of $g_{m}'$ onto $A_{m}$ is continuous.
If $(\aaa_{m},\bbb_{m})$ is an interval contiguous to $A_{m}$
then $g_{m}'(x)=g_{m}'(\aaa_{m})=f_{n(m)}'(\aaa_{m})=g_{m}'(\bbb_{m})=
f_{n(m)}'(\bbb_{m})$ should hold for any $x\in (\aaa_{m},\bbb_{m})$.
This easily implies that $g_{m}\in C^{1}[a_{1},b_{1}].$
The above argument also shows that if $g_m'$ is non-constant on an interval
then $A_{m}$ should have points in this interval.

One can deduce from \eqref{*B3*a} that
\begin{equation}\label{*B8*a}
0\leq g_{m}'(y)-g_{m}'(x)<\Xi_{n(m)}\cdot (y-x)\text{ for any }x,y\in[a_{1},b_{1}],\ 
x<y.
\end{equation}
For any $m$ and $n$ we select
a sufficiently small $\DDD_{m,n}>0$
satisfying \eqref{*Dmnr1*}, \eqref{*B12*b} and \eqref{*B13*a},
after the $\DDD_{m,n}$'s are fixed we can select $\ddd_{m,n}$.
Suppose $x,x\pm\DDD_{m,n(m)}\in[a_{1},b_{1}]$.
By \eqref{*B7*c}
\begin{equation}\label{*B9*a}
\Big | \frac{g(x+\DDD_{m,n(m)})-g(x)}{\DDD_{m,n(m)}} - \frac{g_{m}(x+\DDD_{m,n(m)})-g_{m}(x)}{\DDD_{m,n(m)}}\Big |<\frac{2\ddd_{m,n(m)}}{\DDD_{m,n(m)}}.
\end{equation}
Since $\ds \frac{g(x+\DDD_{m,n(m)})-g(x)}{\DDD_{m,n(m)}}\geq g'_{+}(x)$ and
$$\frac{g_{m}(x+\DDD_{m,n(m)})-g_{m}(x)}{\DDD_{m,n(m)}}\leq g_{m}'(x+\DDD_{m,n(m)})<
g_{m}'(x)+\DDD_{m,n(m)}\Xi_{n(m)}$$
we have by \eqref{*B9*a}
\begin{equation}\label{*B9*b}
g_{+}'(x)<g_{m}'(x)+\DDD_{m,n(m)} \Xi_{n(m)} + \frac{2\ddd_{m,n(m)}}{\DDD_{m,n(m)}},
\end{equation}
and similarly
\begin{equation}\label{*B10*a}
g_{+}'(x)\geq g_{-}'(x)>g_{m}'(x)-\DDD_{m,n(m)} \Xi_{n(m)} - \frac{2\ddd_{m,n(m)}}{\DDD_{m,n(m)}}.
\end{equation}

If we assume that
\begin{equation}\label{*B10*b}
\ddd_{m,n}\leq \DDD_{m,n}^{2}
\end{equation}
then \eqref{*B9*b} and \eqref{*B10*a} imply that
\begin{equation}\label{*B10*c}
|g'_{\pm}(x)-g'_{m}(x)|<2\DDD_{m,n(m)}(\Xi_{n(m)}+1)
\end{equation}
$\text{ for any }
x\in[ a_{1}+\DDD_{m,n(m)},b_{1}-\DDD_{m,n(m)}].$

Set $a_{2}=a_{1}+\frac{b_{1}-a_{1}}{25}$ and $b_{2}=b_{1}-\frac{b_{1}-a_{1}}{25}.$
By \eqref{*B7*a}, $A\cap (a_{2},b_{2})\not=\ess$.
We can suppose that
\begin{equation}\label{*Dmnr1*}
\DDD_{1,n(1)}<\frac{d_{n(1)}}{25}\leq \frac{b_{1}-a_{1}}{25}
\end{equation}
we use \eqref{*B10*c} to obtain that
\begin{equation}\label{*B11*a}
|g_{\pm}'(a_{2})-g_{1}'(a_{2})|<2\DDD_{1,n(1)}\cdot (\Xi_{n(1)}+1) 
\text{ and }
\end{equation}
$$|g_{\pm}'(b_{2})-g_{1}'(b_{2})|<2\DDD_{1,n(1)}\cdot (\Xi_{n(1)}+1).$$
Since by \eqref{*B6*a}, $f_{n(1)}''(x)=\xi_{n(1)}''>0$ for $x\in(a_{1},b_{1})$
the function $f_{n(1)}$ is convex on $(a_{1},b_{1})$,  hence  
$f_{n(1)}(x)=g_{1}(x)$ for all $x\in [a_{1},b_{1}]$ and
\begin{equation}\label{*B11*b}
g_{1}'(b_{2})=g_{1}'(a_{2})+\xi_{n(1)}''(b_{2}-a_{2})\geq 
g_{1}'(a_{2})+\xi_{n(1)}'' \frac{d_{n(1)}}{2}.
\end{equation}
Therefore, by \eqref{*B11*a}
\begin{equation}\label{*B12*a}
g'_{-}(b_{2})-g'_{+}(a_{2})>\xi_{n(1)}'' \frac{d_{n(1)}}{2}-4\DDD_{1,n(1)}(\Xi_{n(1)} +1).
\end{equation}
We can suppose that for any $m$ and $n$
\begin{equation}\label{*B12*b}
\DDD_{m,n} < \frac{\xi_{n}'' d_{n}}{16(\Xi_{n}+1)}.
\end{equation}
Thus \eqref{*B11*b} and \eqref{*B12*a} imply that
\begin{equation}\label{*B12*d}
g_{-}'(b_{2})-g_{+}'(a_{2})> \frac{\xi_{n(1)}'' d_{n(1)}}{4}.
\end{equation}
Suppose 
\begin{equation}\label{*B12*c}
m>\frac{8}{\xi_{n(1)}'' d_{n(1)}}.
\end{equation}
Then $[a_{2},b_{2}]$ can be divided into no more than
$\lc \frac{1}{d_{n(m)}} \rc<\frac{2}{d_{n(m)}}$ many intervals
on which $f'_{n(m)}$ is linear, hence there is an
$[a_{3,m},b_{3,m}]\sse [a_{2},b_{2}]$ on which $f_{n(m)}'$ is linear
and using \eqref{*B12*d} and \eqref{*B12*c}
\begin{equation}\label{*B13*d}
g_{-}'(b_{3,m})-g'_{+}(a_{3,m})>\frac{\xi''_{n(1)}d_{n(1)}}{4}\cdot \frac{d_{n(m)}}{2}>\frac{d_{n(m)}}{m}.
\end{equation}
Put
\begin{equation}\label{*B13*b}
a_{4,m}=\min \{ A\cap [a_{3,m},b_{3,m}] \}\text{ and }
b_{4,m}=\max \{ A\cap [a_{3,m},b_{3,m}] \}.
\end{equation}
Since $g$ is linear on $[a_{3,m},a_{4,m}]$
and on $[b_{4,m},b_{3,m}]$ when $a_{3,m}\not=a_{4,m}$, or $b_{3,m}\not=b_{4,m}$ we have
\begin{equation}\label{*B13*c}
g_{-}'(a_{4,m})=g_{+}'(a_{3,m})\text{ and }
g_{+}'(b_{4,m})=g_{-}'(b_{3,m}).
\end{equation}
We can also suppose that 
\begin{equation}\label{*B13*a}
\DDD_{m,n}<\frac{d_{n}}{8m(\Xi_{n}+1)}.
\end{equation}

By \eqref{*B10*c}, \eqref{*B13*d}, \eqref{*B13*c} and \eqref{*B13*a}
\begin{equation}\label{*B14*a}
g'_{m}(b_{4,m})-g'_{m}(a_{4,m})>\frac{d_{n(m)}}{2m}.
\end{equation}
This implies that $g_{m}'$ is non-constant on $[a_{4,m},b_{4,m}]$.
Hence, as we observed previously, $A_{m}\cap [a_{4,m},b_{4,m}]\not=\ess$.
If we let $\aaa_{4,m}=\min \{ [a_{4,m},b_{4,m}]\cap A_{m}\}$ and 
$\bbb_{4,m}=\max \{[a_{4,m},b_{4,m}]\cap A_{m}\}$ then
$$g_{m}'(a_{4,m})=g_{m}'(\aaa_{4,m})=f'_{n(m)}(\aaa_{4,m})<
g_{m}'(b_{4,m})=g_{m}'(\bbb_{4,m})=f'_{n(m)}(\bbb_{4,m}).$$
Therefore, using piecewise linearity of $f_{n(m)}'$ on 
$[a_{3,m},b_{3,m}]$ we have
$$\xi''_{n(m)}\defeq f_{n(m)}''(x)>0\text{ for all }x\in (a_{3,m},b_{3,m}).$$
Using \eqref{*B8*a} we infer
\begin{equation}\label{*B14*b}
b_{4,m}-a_{4,m}>\frac{d_{n(m)}}{2m\Xi_{n(m)}}.
\end{equation}
Observe that from $a_{4,m},b_{4,m}\in A$ it follows that if
$E_{f,m}$ denotes the convex hull of $\{ (x,y): x\in [a_{4,m},b_{4,m}],\ y\geq f(x) \}$ then with $g$ defined after \eqref{*B6*a} we have $g(x)=\min
\{ y:(x,y)\in E_{f,m} \}$ for $x\in [a_{4,m},b_{4,m}]$. 
Now we can apply Lemma \ref{*lemB4*}
 with
$a=a_{4,m}$, $b=b_{4,m}$, $\xi_{0}=\xi_{n(m)}$, $\xi_{0}''=\xi_{n(m)}''$,
$f_{0}=f_{n(m)}$,  $\ddd_{0}=\ddd_{m,n(m)}$ and $f=f$,
to infer that if $a_{4,m}\leq x_{1}<x_{2}\leq b_{4,m}$ and $(x_{1},x_{2})\cap A=\ess$
then
\begin{equation}\label{*B14*c}
x_{2}-x_{1}< \TTT_{m}\defeq 4\sqrt{\frac{\ddd_{m,n(m)}}{\xi_{n(m)}}}.
\end{equation}
Observe that by \eqref{*B3*b}, $\TTT_{m}\to 0$ as $m\to\oo$.
We can suppose that $\ddd_{m,n}$ is chosen so small that
\begin{equation}\label{*63b*}
10^{\frac{2m\Xi_{n}}{d_{n}}}<\frac{1}{4}\sqrt{\frac{\xi_{n}}{\ddd_{m,n}}},
\text{ that is }
\ddd_{m,n}<\frac{1}{16}10^{-\frac{4m\Xi_{n}}{d_{n}}}\xi_{n}.
\end{equation}
Select $\ell _{m}$  such that
\begin{equation}\label{*63bb*}
10^{-\ell _{m}-1}\leq 
\TTT_{m}
\leq 10^{-\ell _{m}}.
\end{equation}
Since $\TTT_{m}\to 0$ we also have $\ell _{m}\to\oo$ as $m\to\oo$.

By \eqref{*B14*c}, the set $A$ intersects any $[(j-1)\cdot 10^{-\ell_{m}},j\cdot 10^{-\ell _{m}}]$ grid interval which is in $[a_{4,m},b_{4,m}]$.
The number of these intervals is larger than
$((b_{4,m}-a_{4,m})/10^{-\ell _{m}})-3$.

Hence if $\ell _{m}>2$ then by \eqref{*B14*b},
 \eqref{*63b*} and \eqref{*63bb*} we obtain
 that  $A$ intersects in $[a_{4,m},b_{4,m}]$ at least
$$
(b_{4,m}-a_{4,m})\cdot 10^{\ell _{m}}-3\geq
\frac{d_{n(m)}}{2m\Xi_{n(m)}}\cdot 10^{\ell _{m}}-3\geq \frac{1}{\ell _{m}+1}
10^{\ell _{m}}-3\geq \frac{1}{\ell _{m}+1}
10^{\ell _{m}-1}$$ many intervals
of the form $[(j-1)\cdot 10^{-\ell_{m}},j\cdot 10^{-\ell _{m}}]$.
Hence
$$\limsup_{m\to\oo}\frac{\log \can_{10,\ell_{m}}(A)}{\ell _{m}\log 10}\geq \limsup_{m\to\oo}\frac{\log (\frac{1}{\ell_m+1}10^{\ell _{m}-1})}{\ell _{m}\log 10}=1.$$
This implies that $\udimm A=1.$

\end{proof}

\begin{theorem}\label{*th6}
Suppose $1<\aaa\leq 2.$ If $f\in C^{\aaa}[0,1]$ then there is a closed set $A\sse[0,1]$  such that $\dimh A=\aaa-1$ and $f|_{A}$ is convex, or concave on $A$.
\end{theorem}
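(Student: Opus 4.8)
The plan is to \emph{integrate} the construction behind Theorem \ref{*th3}, replacing the running maximum by the greatest convex minorant. Since $1<\aaa\leq 2$, the derivative $f'$ exists and belongs to $C^{\aaa-1}[0,1]$; more precisely $|f'(x)-f'(y)|\leq c\,|x-y|^{\aaa-1}$ for all $x,y$, where $c\defeq \calla(f)<\oo$ (for $\aaa=2$ this is just the Lipschitz bound $c=\max|f''|$). If $f$ is affine then $f|_{A}$ is simultaneously convex and concave for every $A$, so we may take for $A$ any closed set of Hausdorff dimension $\aaa-1$ and are done; hence assume from now on that $f$ is not affine. Let $g$ be the greatest convex minorant of $f$ on $[0,1]$ (the lower boundary of the convex hull of the epigraph, exactly as in \eqref{*B4*b}--\eqref{*B4*c}), and let $A_{0}=\{x\in[0,1]:f(x)=g(x)\}$ be the contact set, a closed set containing $0$ and $1$. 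Since $g$ is convex on $[0,1]$ and $f=g$ on $A_{0}$, every second divided difference of $f$ formed from points of $A_{0}$ is nonnegative, i.e. $f|_{A_{0}}$ is convex.

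First I would record two standard facts about $g$: that $g\in C^{1}[0,1]$, and that $g'(x)=f'(x)$ at every $x\in A_{0}\cap(0,1)$. The $C^{1}$ claim holds because $g'$ is monotone and so could only jump; but at an interior contact point $x$ any supporting line of $g$ lies below $g\leq f$ and touches $f$ at $x$, so its slope must equal $f'(x)$, which both rules out a jump and gives $g'(x)=f'(x)$. On each interval contiguous to $A_{0}$ the function $g$ is affine, hence $g'$ is constant there and equals $f'$ at the (interior) endpoints of that interval.

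Next I would identify the image $f'(A_{0})$. Since $g'$ is continuous and nondecreasing, $g'([0,1])=[g'(0),g'(1)]$, and because on the contiguous intervals $g'$ only takes values it already attains at contact points, $g'(A_{0})=[g'(0),g'(1)]$ as well. As $g'=f'$ on $A_{0}\cap(0,1)$, the set $f'(A_{0})$ contains the open interval $(g'(0),g'(1))$. The point is that this interval is nondegenerate: if $g$ were affine then $f$ would lie above its chord, and if in addition the least concave majorant of $f$ were affine then $f$ would lie below its chord, forcing $f$ affine, contrary to assumption. Thus, after possibly replacing ``greatest convex minorant/convex'' by ``least concave majorant/concave'' (which produces a set on which $f|_{A}$ is concave), we may assume $L\defeq g'(1)-g'(0)>0$.

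Finally I would apply Proposition \ref{*profalconer} to $f'\colon A_{0}\to\R$ with exponent $\aaa-1$ and constant $c$: taking $s=\aaa-1$ gives $\cah^{1}(f'(A_{0}))\leq c\,\cah^{\aaa-1}(A_{0})$. Since $f'(A_{0})\supseteq (g'(0),g'(1))$ we have $\cah^{1}(f'(A_{0}))\geq L>0$, whence $\cah^{\aaa-1}(A_{0})\geq L/c>0$ and $\dimh A_{0}\geq \aaa-1$. To obtain a set of dimension \emph{exactly} $\aaa-1$, I would pass to a compact subset $A\sse A_{0}$ with $0<\cah^{\aaa-1}(A)<\oo$; such a subset exists for any closed (hence Borel) set of positive $\cah^{\aaa-1}$-measure by the standard subset theorem for Hausdorff measures. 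Then $\dimh A=\aaa-1$, and being a subset of $A_{0}$ the restriction $f|_{A}$ is still convex. I expect the main obstacle to be the two structural facts about the convex minorant — that $g\in C^{1}$ and $g'=f'$ on $A_{0}$ — together with verifying that $f'(A_{0})$ is a genuinely nondegenerate interval; this is precisely where the hypothesis that $f$ is not affine (and the convex-or-concave dichotomy) enters, mirroring the role of the assumption $f(a)<f(b)$ in the proof of Theorem \ref{*th3}.
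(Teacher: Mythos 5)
Your proposal is correct and follows essentially the same route as the paper: both take the contact set of the greatest convex minorant (the lower boundary of the convex hull), note that $f'$ agrees there with the minorant's derivative and maps it onto a nondegenerate interval (with the convex/concave dichotomy handling the degenerate case), and conclude via Proposition \ref{*profalconer} that this set has Hausdorff dimension at least $\aaa-1$. If anything your write-up is slightly more complete, since you apply the H\"older estimate directly to $f'|_{A_{0}}$ instead of first proving the minorant is globally $C^{\aaa}$, and you explicitly pass to a compact subset of positive finite $\cah^{\aaa-1}$-measure to obtain dimension exactly $\aaa-1$, a reduction the paper leaves implicit.
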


 \begin{proof}
 If $\aaa=2$ then $f\in C^{2}[0,1]$ and there is an interval
 $A\sse [0,1]$  such that $f''$ is not changing its sign.
 
 Next suppose that $1<\aaa<2$.
 Denote by $E_{f}$ the convex hull of the graph of $f$, that is,
 the convex hull of $\ds \{ (x,f(x)):x\in[0,1] \}$.
 Set 
 $$\fff_{1,f}(x)=\max\{y:(x,y)\in E_{f}  \}\text{ and }
 \fff_{2,f}(x)=\min\{y:(x,y)\in E_{f}  \}.$$
 If $\fff_{1,f}$ and $\fff_{2,f}$ both coincide with the line segment connecting
 $(0,f(0))$ and $(1,f(1))$ then $f$ also coincides with them and hence it is linear and $A=[0,1]$.
 
 Suppose $\fff_{2,f}$ is not a line segment connecting  $(0,f(0))$ and $(1,f(1))$,
 (the other case with $\fff_{1,f}$ is similar). Set $A=\{ x: \fff_{2,f}(x)=f(x) \}.$
Then $A\cap (0,1)\not= \ess.$
Obviously, $\fff_{2,f}$  is convex.

One can also easily see that it is $C^{\aaa}$. Indeed, 
since $f$ is differentiable $f'(x)=\fff_{2,f}'(x)$ holds for
$x\in A$. If $(c,d)$ is an interval contiguous to $A$
then $\fff_{2,f}'(x)=\fff_{2,f}'(c)=\fff_{2,f}'(d)=f'(c)=f'(d)$
holds for any $x\in (c,d)$.

Suppose $y\in A$. 

If $x\in A$ then
$$|\fff_{2,f}'(x)-\fff_{2,f}'(y)|=|f'(x)-f'(y)|\leq \calla(f)|x-y|^{\aaa-1}.$$ 

If $x\not\in A$, but $\min A< x<\max A$ then we select $c,d\in A$  such that 
$x\in (c,d)$ and $(c,d)\cap A=\ess.$ Then $$|\fff_{2,f}'(x)-\fff_{2,f}'(y)|=|f'(c)-f'(y)|=|f'(d)-f'(y)|\leq$$
$$\calla(f)\min\{ |c-y|^{\aaa-1}, |d-y|^{\aaa-1} \}\leq \calla(f)
|x-y|^{\aaa-1}.$$
The cases $\min A> x$, or $x>\max A$ are similar and are left
to the reader.

Moreover, $\fff_{2,f}$ is linear on the intervals contiguous to $A$.
Then $\fff_{2,f}'$ is a $C^{\aaa-1}$ function which is monotone increasing and is constant on the intervals contiguous to $A$
and $$\fff_{2,f}'([0,1])=\fff_{2,f}'(A)=[\fff_{2,f}'(0),\fff_{2,f}'(1)]$$ and by Theorem \ref{*th3} we obtain that $\dimh A\geq \aaa-1$. 
 \end{proof}

 \begin{theorem}\label{*th12}
 Suppose $0\leq \aaa<1$ and $\cag_{\aaa}$ is a dense $G_{\ddd}$
 set in $C_{1}^{\aaa}[0,1]$, (when $\aaa=0$ then in this theorem and in its proof we use $C^{0}[0,1]$ instead of $C_{1}^{\aaa}[0,1]$). Then  there exists a dense $G_{\ddd}$ set $\cag_{1+\aaa}$
 in $C_{1}^{1+\aaa}[0,1]$, (in $C^{1}[0,1]$ when $\aaa=0$)  such that for any $f\in \cag_{1+\aaa}$
 we have $f'\in \cag_{\aaa}$.
 \end{theorem}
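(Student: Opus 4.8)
The plan is to realise $\cag_{1+\aaa}$ as the preimage of $\cag_{\aaa}$ under the differentiation operator, and to exploit the fact that, once the constant of integration is fixed, differentiation is an isometry between the two spaces. The first point to record is that, for $0<\aaa<1$, by \eqref{*defcalla*} (where $\lf\aaa\rf=0$ and $\{1+\aaa\}=\aaa$) a function $f$ lies in $C_{1}^{1+\aaa}[0,1]$ precisely when $f$ is differentiable and $\calla(f')\le 1$, which is exactly the condition defining membership $f'\in C_{1}^{\aaa}[0,1]$ (and for $\aaa=0$, $f\in C^{1}[0,1]$ precisely when $f'\in C^{0}[0,1]$). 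Hence the map $D(f)=f'$ is a well-defined surjection $D:C_{1}^{1+\aaa}[0,1]\to C_{1}^{\aaa}[0,1]$ (respectively $D:C^{1}[0,1]\to C^{0}[0,1]$), whose fibres are the families of antiderivatives of a given $g$. Moreover $D$ is $1$-Lipschitz, since $\|Df-D\tilde f\|_{0}=\|f'-\tilde f'\|_{0}\le\|f-\tilde f\|_{1}$, and in particular continuous. I then set $\cag_{1+\aaa}=D^{-1}(\cag_{\aaa})$.

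For the $G_{\ddd}$ part I would use only continuity of $D$: writing $\cag_{\aaa}=\bigcap_{k}U_{k}$ with each $U_{k}$ open (and dense) in $C_{1}^{\aaa}[0,1]$, continuity makes every $D^{-1}(U_{k})$ open in $C_{1}^{1+\aaa}[0,1]$, so $\cag_{1+\aaa}=\bigcap_{k}D^{-1}(U_{k})$ is $G_{\ddd}$.

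The density of $\cag_{1+\aaa}$ is where the isometry observation does the work. Given $f_{0}\in C_{1}^{1+\aaa}[0,1]$ and $\eee>0$, density of $\cag_{\aaa}$ in $C_{1}^{\aaa}[0,1]$ yields $g\in\cag_{\aaa}$ with $\|g-f_{0}'\|_{0}<\eee$. I would then take the antiderivative with matched initial value, $f(x)=f_{0}(0)+\int_{0}^{x}g(t)\,dt$. Since $\calla(f')=\calla(g)\le 1$, the first paragraph gives $f\in C_{1}^{1+\aaa}[0,1]$, and $f'=g\in\cag_{\aaa}$, so $f\in\cag_{1+\aaa}$. Because $f(0)=f_{0}(0)$ we have $f(x)-f_{0}(x)=\int_{0}^{x}(g-f_{0}')$, whence $\|f-f_{0}\|_{0}\le\|g-f_{0}'\|_{0}$ and therefore $\|f-f_{0}\|_{1}=\max\{\|f-f_{0}\|_{0},\|g-f_{0}'\|_{0}\}=\|g-f_{0}'\|_{0}<\eee$. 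Thus $\cag_{1+\aaa}$ meets every ball and is dense.

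I expect no serious obstacle here; the argument is essentially soft once two elementary facts are in place. The first is the equivalence $f\in C_{1}^{1+\aaa}[0,1]\iff f'\in C_{1}^{\aaa}[0,1]$, which guarantees that $D$ is a continuous surjection between exactly the prescribed unit-ball spaces, so the construction never leaves them. The second is that fixing the value at $0$ turns $D$ into an isometry on that slice, which simultaneously delivers density and controls the full $\|\cdot\|_{1}$-metric from the $\|\cdot\|_{0}$-metric. The case $\aaa=0$ is identical, with $C^{1}[0,1]$ and $C^{0}[0,1]$ replacing the Hölder unit balls and no $\calla$-constraint to verify.
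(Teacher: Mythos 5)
Your proposal is correct and is essentially the paper's own argument: the paper simply sets ${\cag}_{1+\aaa,m}=\{ c+\int_{0}^{x}\fff(t)\,dt : c\in\R,\ \fff\in{\cag}_{\aaa,m}\}$, which is exactly your $D^{-1}(\cag_{\aaa,m})$, and asserts without detail that these sets are dense and open. You supply the same construction together with the routine verifications (continuity of $D$ for openness, the matched-constant antiderivative for density), so nothing further is needed.
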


 \begin{proof}
 Suppose $\cag_{\aaa}=\cap_{m=1}^{\oo}\cag_{\aaa,m}$
 where $\cag_{\aaa,m}$ is dense and open  in $C_{1}^{\aaa}[0,1]$.
 Set $${\cag}_{1+\aaa,m}=\{ c+\int_{0}^{x}\fff(t)dt:c\in\R, \ \fff\in{\cag}_{\aaa,m} \}.$$
 Then ${\cag}_{1+\aaa,m}$ is dense and open in $C_{1}^{1+\aaa}[0,1]$. Set $\cag_{1+\aaa}=\cap_{m}{\cag}_{1+\aaa,m}$.
 \end{proof}

The next lemma is a variant of Proposition 3 of \cite{[crm]}.
 
\begin{lemma}\label{*lemAB*}
If $f\in C^{1}[0,1]$ and $f|_{A}$ is convex (or concave) then there is
a set $B$ such that $f'|_{B}$ is monotone and
\begin{equation}\label{*dimABest*}
\dimh B\geq \dimh A,\qquad \udimm B\geq \udimm A \text{ and }
\ldimm B \geq \ldimm A.
\end{equation}
\end{lemma}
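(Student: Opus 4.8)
The plan is to mimic the construction in the proof of Theorem \ref{*th6}, but starting from the given set $A$ rather than from the whole interval. We may assume $A$ is compact: since $f$ is continuous the three point convexity inequality passes to the closure, so $f|_{\overline A}$ is still convex, while $\dimh \overline A\geq \dimh A$ and the Minkowski dimensions of $A$ and $\overline A$ agree; thus it suffices to produce $B$ for $\overline A$. Write $I=[\min A,\max A]$ and let $g$ be the lower boundary of the convex hull of $\{(x,f(x)):x\in A\}$, that is $g(x)=\min\{y:(x,y)\in E\}$ where $E$ is that convex hull, as in \eqref{*B4*b}--\eqref{*B4*c}. Because $f|_A$ is convex, every point $(x,f(x))$, $x\in A$, lies on this lower boundary, so $g=f$ on $A$; moreover $g$ is convex on $I$ and affine on each interval $(a_i,b_i)$ contiguous to $A$. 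Being convex, $g$ has a nondecreasing right derivative $g'_+$ defined on all of $I$. I then set
\[
B=\{x\in I:\ f'(x)=g'_+(x)\},
\]
so that $f'|_B=g'_+|_B$ is automatically monotone increasing. (In the concave case one uses the upper boundary and a nonincreasing derivative.)

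Next I would prove the Hausdorff estimate. If $x_0\in A$ is approached from the right by points of $A$, then for such $x\to x_0+$ the quotients $\frac{f(x)-f(x_0)}{x-x_0}=\frac{g(x)-g(x_0)}{x-x_0}$ decrease to $g'_+(x_0)$; since $f$ is differentiable this common limit equals $f'(x_0)$, whence $f'(x_0)=g'_+(x_0)$ and $x_0\in B$. Therefore $A\sm B$ is contained in the set of points of $A$ which are not right accumulation points of $A$, i.e. in the (countable) set of left endpoints of the gaps $(a_i,b_i)$ together with $\max A$. As a countable set does not affect Hausdorff dimension, $\dimh B\geq \dimh(A\cap B)=\dimh A$.

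The heart of the proof, and the main obstacle, is the Minkowski estimate, where removing even a countable subset of $A$ can in principle lower the box dimension, so $B\supseteq A\cap B$ is not enough by itself. Here I would use the gaps. On each contiguous interval $[a_i,b_i]$ the function $g$ is affine with slope $s_i=\frac{f(b_i)-f(a_i)}{b_i-a_i}$, and $f(a_i)=g(a_i)$, $f(b_i)=g(b_i)$; by the mean value theorem there is $\xi_i\in(a_i,b_i)$ with $f'(\xi_i)=s_i=g'_+(\xi_i)$, so $\xi_i\in B$. I would then build a map, with uniformly bounded multiplicity, from the grid intervals meeting $A$ to grid intervals meeting $B$: a grid interval that meets a right accumulation point of $A$ already meets $B$; a grid interval $J$ meeting $A$ only in non right accumulation points has a rightmost point $p\in J$ which starts a gap $(p,b)$, and $J$ is sent to the grid interval containing $\xi_{(p,b)}$. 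Since distinct source intervals start disjoint gaps, a short argument using $p<\xi_{(p,b)}<b$ and the disjointness of the gaps shows that at most boundedly many source intervals are sent to any fixed target, so $\can_{k,\ell}(A)\leq C\,\can_{k,\ell}(B)$ for all $\ell$. Passing to $\liminf$ and $\limsup$ in \eqref{*minkdi} then yields $\ldimm B\geq \ldimm A$ and $\udimm B\geq \udimm A$, which together with the previous paragraph completes the proof. The delicate point to get right is exactly this bounded multiplicity when the gaps adjacent to the ``lost'' intervals are long; the interlacing $a_i<\xi_i<b_i$ of the mean value points with the gap endpoints is what keeps the covering numbers of $B$ comparable to those of $A$.
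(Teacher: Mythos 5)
Your proposal is correct and follows essentially the same route as the paper: reduce to closed $A$, extend $f|_A$ affinely across the contiguous gaps to a convex function, use the Mean Value Theorem to place a point of $B$ with the gap's slope inside each gap, dispose of the Hausdorff bound by discarding a countable set, and get the Minkowski bounds from a bounded-multiplicity comparison $\can_{k,\ell}(A)\leq C\,\can_{k,\ell}(B)$. The only cosmetic difference is that you define $B$ as the coincidence set $\{f'=g'_+\}$ while the paper takes the accumulation points of $A$ together with two mean-value points flanking each isolated point; both yield the same estimates.
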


\begin{proof}
By turning to its closure we can assume that $A$
is closed and without limiting generality we suppose that
$f|_{A}$ is convex.

 Denote by $\cai_A$ the shortest closed interval containing $A$.
One can extend the definition of $f|_{A}$ onto $\cai_A$ to obtain a convex function $h$
defined on $\cai_{A}$ which is continuous and affine on intervals
contiguous to  $A$
 and $f|_{A}=h|_{A}$. At two-sided accumulation points
$a$ of $A$ we have $h'(a)=f'(a)=g(a)$. At one-sided accumulation points $a$ of $A$
we have $h'_{\pm}(a)=f'(a)=g(a)$ where $h'_{\pm}(a)=h'_{+}(a)$, or $h'_{\pm}(a)=h'_{-}(a)$ for right, or left accumulation points of $A$, respectively.
Since Hausdorff dimension is not changed if we remove a countable
set one could simply denote by $B$ the accumulation points of
$A$ and observe that by convexity of $h$ on $\cai_A$ we have
$f'$ monotone increasing on $B$ and $\dimh B=\dimh A$.

The Minkowski dimension is more sensitive to alterations on
countable sets.
Suppose $a$ is an isolated point of $A$.
If $a$ is not an endpoint of $\cai_A$ then select $c\in A$ and $b\in A$ such that
$b<a<c$ and $(b,c)\cap A=\{ a \}.$ By the Mean Value Theorem
there is $a_{-}\in (b,a)$ and $a_{+}\in (a,c)$  such that $$f'(a_-)=\frac{f(a)-f(b)}{a-b}=\frac{h(a)-h(b)}{a-b}, \text{ and }$$
$$f'(a_+)=\frac{f(c)-f(a)}{c-a}=\frac{h(c)-h(a)}{c-a}.$$
If $a$ is the left-endpoint of $\cai_A$ then we define only $a_+$, if 
$a$ is the right-endpoint of $\cai_A$ then we define only $a_-$.

Denote by $B$ the set which contains all accumulation points of $A$
and the points $a_{+}$ and $a_{-}$ for isolated points of $A$.
Then the convexity of $h$ on $\cai_A$ and the above
equalities imply that $f'$ is monotone increasing on $B$.
The $1/2^{\ell }$ grid intervals taken into consideration in $\can_{2,\ell }(B)$
cover all accumulation points of $A$ and the points $a_{+}$ and $a_{-}$ 
corresponding to isolated points of $A$. 
Hence $\can_{2,\ell }(A)\leq  3\can_{2,\ell }(B)$.
This implies that the part of \eqref{*dimABest*} concerning
the Minkowski dimension holds as well. Since $B$ differs from
$A$ in a countable set for this set $B$ the statement about
the Hausdorff dimension holds as well.
\end{proof} 
 
\begin{theorem}\label{*th11b}
Suppose $0\leq \aaa<2.$ There exists a dense $G_{\ddd}$ set $\cag$ in $C_{1}^{\aaa}[0,1]$ (in $C[0,1]$ when $\aaa=0$)
 such that for any $f\in\cag$ and $A\sse [0,1]$
 if $f|_{A}$ is convex, or concave then 
\begin{equation}\label{*C1*a}
\dimh A\leq \ldimm A\leq \max \{ 0, \aaa-1 \}.
\end{equation} 
\end{theorem}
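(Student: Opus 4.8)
**Theorem \ref{*th11b}** asks us to show that for a generic $f\in C_1^\aaa[0,1]$ (with $0\le\aaa<2$), any set $A$ on which $f$ is convex or concave satisfies $\dimh A\le\ldimm A\le\max\{0,\aaa-1\}$. My plan is to split into the two regimes $0\le\aaa<1$ and $1\le\aaa<2$, and in each case reduce the convex/concave restriction problem to a monotone restriction problem for $f'$, where the earlier generic results (Theorem \ref{*th4}) are available. The key bridge is Lemma \ref{*lemAB*}: if $f\in C^1[0,1]$ and $f|_A$ is convex (or concave), then there is a set $B$ with $f'|_B$ monotone and $\dimh B\ge\dimh A$, $\ldimm B\ge\ldimm A$, $\udimm B\ge\udimm A$. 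So a dimension bound on monotone-restriction sets of $f'$ transfers directly to a bound on convex-restriction sets of $f$.

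\medskip

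For the main regime $1\le\aaa<2$, write $\aaa=1+\bbb$ with $0\le\bbb<1$, so $\max\{0,\aaa-1\}=\bbb$. First I would apply Theorem \ref{*th4} to the index $\bbb$: it furnishes a dense $G_\ddd$ set $\cag_\bbb\sse C_1^\bbb[0,1]$ (or $C[0,1]$ when $\bbb=0$) such that for every $\fff\in\cag_\bbb$, any set on which $\fff$ is monotone has lower Minkowski dimension at most $\bbb$. Next I invoke Theorem \ref{*th12} with this $\cag_\bbb$: it produces a dense $G_\ddd$ set $\cag=\cag_{1+\bbb}\sse C_1^{1+\bbb}[0,1]=C_1^\aaa[0,1]$ such that every $f\in\cag$ has derivative $f'\in\cag_\bbb$. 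Now the argument closes cleanly: take any $f\in\cag$ and any $A$ with $f|_A$ convex or concave. Since $f\in C^1[0,1]$, Lemma \ref{*lemAB*} yields a set $B$ with $f'|_B$ monotone and $\ldimm B\ge\ldimm A$. But $f'\in\cag_\bbb$, so by the defining property of $\cag_\bbb$ we have $\ldimm B\le\bbb=\aaa-1$. Combining, $\ldimm A\le\ldimm B\le\aaa-1$, and since $\dimh A\le\ldimm A$ always holds, \eqref{*C1*a} follows.

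\medskip

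The remaining regime $0\le\aaa<1$ gives the bound $\max\{0,\aaa-1\}=0$, i.e. every convex- or concave-restriction set must have $\ldimm A=0$. Here $f$ need not be differentiable, so Lemma \ref{*lemAB*} is not directly available, and this is where I expect the real work to lie. The natural approach is to prove directly that for a generic $f\in C_1^\aaa[0,1]$ (generically even nowhere differentiable), no convex restriction can live on a set of positive lower box dimension. I would build a dense $G_\ddd$ set by a construction analogous to the proof of Theorem \ref{*th4}: around a dense family of smooth functions, perturb by small periodic bumps whose second differences have a controlled sign so as to \emph{destroy} convexity at every sufficiently fine scale, forcing any convex restriction set to meet only $O(\text{poly})$ many grid intervals at scale $2^{-M}$ and hence have lower box dimension $0$. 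The technical subtlety — and the main obstacle — is arranging the bumps so that a convexity constraint $f[x_1,x_2,x_3]\ge 0$ across three points at the relevant scale is violated, while simultaneously keeping $\calla(f)\le 1$ and the perturbations $\ddd_{m,n}$-small; this is a second-difference analogue of the first-difference (monotonicity) obstruction engineered in \eqref{*G8*c}, and it must be tuned (as in \eqref{*G7*a}) so that the gap estimate dominates the $\ddd_{m,n}$-fluctuation coming from membership in the ball $B_\aaa(f_{m,n},\ddd_{m,n})$.
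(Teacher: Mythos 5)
For the regime $1\le\aaa<2$ your argument is exactly the paper's: the paper's entire treatment of this case is the one line ``apply Theorems \ref{*th4}, \ref{*th12} and Lemma \ref{*lemAB*}'', and your chain $\ldimm A\le\ldimm B\le\aaa-1$ via $f'\in\cag_{\aaa-1}$ is the intended reading of that line. No issues there.

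For $0\le\aaa<1$, however, what you have written is a plan, not a proof, and this is where the paper spends all of its effort on this theorem. You correctly identify the right strategy (perturb a dense family of smooth $f_n$ by small periodic functions whose second-order behaviour destroys convexity at scale $2^{-M}$, tuned against the ball radius $\ddd_{m,n}$), and you correctly identify the obstacle (violating $f[p_1,p_2,p_3]\ge 0$ while keeping $\calla\le 1$ and the perturbation small), but you do not resolve it. The missing quantitative content is: the perturbation must be taken as $\int_0^x\fff_{m,n}$ where $\fff_{m,n}$ is a sawtooth of period $2^{-M'_{m,n}}$ with $2^{M'_{m,n}}\approx M_{m,n}$ (note: \emph{not} the exponent $M_{m,n}\aaa+\log_2 M_{m,n}$ used in Theorem \ref{*th4} --- the period here is much longer, of order $1/M_{m,n}$), so that $f_{m,n}''<-10$ on most of each period; the ball radius must be $\ddd_{m,n}=2^{-2M_{m,n}}$ so that for three points of $A$ in one concave piece, pairwise $2^{-M_{m,n}}$-separated, the drop of $f_{m,n}$ below its tangent at the middle point, which is at least $10(p_2-p_1)^2\ge 10\ddd_{m,n}$, dominates the $O(\ddd_{m,n})$ slack from $f\in B_\aaa(f_{m,n},\ddd_{m,n})$, forcing strict concavity of $f$ on $\{p_1,p_2,p_3\}$ and hence bounding $\can_{2,M_{m,n}}(A)$ by a constant times $2^{M'_{m,n}}\approx M_{m,n}$, which gives $\ldimm A=0$; and one must check that the Lipschitz constant $\tfrac{1}{(m+n)\Xi_n}<1-\calla(f_n)$ of the perturbation keeps $f_{m,n}$ in $C_1^\aaa[0,1]$. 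None of these choices or verifications appear in your writeup, so as it stands the case $0\le\aaa<1$ --- half of the theorem --- is unproved.
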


\begin{proof}
For $1\leq \aaa <2$ apply Theorems \ref{*th4}, \ref{*th12} and Lemma \ref{*lemAB*}.

Suppose $0<\aaa<1 .$ Our argument will be a variant of the proof of
Theorem \ref{*th4}.
We prove the existence of a  $G_{\ddd}$ set $\cag$ in $C_{1}^{\aaa}[0,1]$
 such that for any $f\in\cag$ and $A\sse [0,1]$
 such that  $f|_{A}$ is convex  we have 
$\ldimm A=0.$ The concave case is similar (or take $-1$ times $\cag$), and
the intersection of two dense $G_{\ddd}$ sets is again dense $G_{\ddd}$.
This implies the claim of the theorem for the case $0<\aaa<1 $.
The case $\aaa=0$, like in the case of Theorem \ref{*th3} needs a little
adjustment, since we work in $C[0,1]$ in this case.

We select a set $\{ {{f}}_{n} \}_{n=1}^{\oo}$ of $C^{\oo}$ functions which is 
dense in $C_{1}^{\aaa}[0,1]$, (in  $C[0,1]$ when $\aaa=0$) and choose
$\Xi_{n}$ satisfying \eqref{*G2*e}, \eqref{*G2*a}, \eqref{*G2b*a}
and 
\begin{equation}\label{*5bb*}
|{{f}}_{n}''|<\Xi_{n} \text{ for  any  } x\in [0,1].
\end{equation}
(Assumption \eqref{*G2b*a} is not needed when $\aaa=0$.)

We put
\begin{equation}\label{*C2*b}
M_{m,n}=2{\Xi_{n}  (\Xi_{n}+10)(m+n)}\text{ and }M_{m,n}'=\Big \lf \frac{\log(M_{m,n})}{\log 2} \Big\rf.
\end{equation}

We define again a function $\fff_{m,n}$ periodic by $2^{-M_{m,n}'}$.
We suppose that \eqref{*G2*c} holds and this time we have
\begin{equation}\label{*C2*d}
\fff_{m,n}(2^{-M_{m,n}'}-2^{-M_{m,n}})=\frac{-1}{(m+n)\Xi_{n}}.
\end{equation}
We suppose again that $\fff_{m,n}$ is linear on $[0,2^{-M_{m,n}'}-2^{-M_{m,n}}]$
and on $[2^{-M_{m,n}'}-2^{-M_{m,n}},2^{-M_{m,n}'}]$. Hence
$\text{ for }x\in(0,2^{-M_{m,n}'}-2^{-M_{m,n}})+j\cdot 2^{-M_{m,n}'},\ j\in\Z$
\begin{equation}\label{*C3*a}
\fff_{m,n}'(x)=\frac{-1}{\Xi_{n}(m+n)(2^{-M_{m,n}'}-2^{-M_{m,n}})}<\frac{-2^{M_{m,n}'}}{\Xi_{n}(m+n)}
< -(\Xi_{n}+10).
\end{equation}

Set 
\begin{equation}\label{*C3*c}
f_{m,n}(x)={{f}}_{n}(x)+\int_{0}^{x}\fff_{m,n}(t)dt.
\end{equation}

(When $\aaa=0$ then it is clear that $f_{m,n}\in C[0,1]$.)
Next for $0<\aaa<1$ we show that $f_{m,n}\in C_{1}^{\aaa}[0,1]\cap C^{1}[0,1].$
Indeed, also using \eqref{*G2b*a}
$$\frac{|f_{m,n}(x)-f_{m,n}(y)|}{|x-y|^{\aaa}}\leq \frac{|{{f}}_{n}(x)-{{f}}_{n}(y)|+|\int_{x}^{y}\fff_{m,n}(t)dt|}{|x-y|^{\aaa}}\leq$$
$$\frac{|{{f}}_{n}(x)-{{f}}_{n}(y)|+\frac{1}{(m+n)\Xi_{n}}|x-y|}{|x-y|^{\aaa}}
\leq
\frac{\calla({{f}}_{n})|x-y|^{\aaa}+(1-\calla({{f}}_{n}))|x-y|}{|x-y|^{\aaa}}\leq 1.$$

From \eqref{*5bb*} and \eqref{*C3*a} it follows that
\begin{equation}\label{*C3*b}
f_{m,n}''(x)<-10
\end{equation}
$\text{ for }x\in (0,2^{-M_{m,n}'}-2^{-M_{m,n}})+j\cdot 2^{-M_{m,n}'},\ $ $j=0,...,2^{-M_{m,n}'}-1.$

Put 
\begin{equation}\label{*C3*d}
\ddd_{m,n} = 2^{-2M_{m,n}},\ {\cal G}_{m}=\bigcup_{n=1}^{\oo}B_{\aaa}(f_{m,n},\ddd_{m,n})
\text{ and }\cag=\bigcap_{m=1}^{\oo}{\cal G}_{m}.
\end{equation}

Then one can easily see that ${\cal G}_{m}$ is dense in $C_{1}^{\aaa}[0,1]$ (in $C[0,1]$ when $\aaa=0$) and $\cag$
is $G_{\ddd}$.

Suppose $f\in\cag$ and $A\sse [0,1]$ is such that $f|_{A}$ is convex.
Then for any $m$ there exists $n(m)$ such that
\begin{equation}\label{*C4*a}
f\in B_{\aaa}(f_{m,n(m)},\ddd_{m,n(m)})
\end{equation}
Suppose that for a $j$ we have $p_{1}<p_{2}<p_{3}$, 
$p_{1},p_{2},p_{3}\in A\cap (0,2^{-M_{m,n(m)}'}-2^{-M_{m,n(m)}})+j\cdot 2^{-M_{m,n(m)}'}$,
\begin{equation}\label{*C4*aa}
p_{2}-p_{1}\geq 2^{-M_{m,n(m)}},\text{ and }p_{3}-p_{2}\geq2^{-M_{m,n(m)}}.
\end{equation}
Denote by $L(x)$ the tangent line of $f_{m,n(m)}$ at the point
$(p_{2},f_{m,n(m)}(p_{2}))$.
By \eqref{*C3*b} and \eqref{*C3*d} we have
\begin{equation}\label{*C5*a}
f_{m,n(m)}(p_{1})<L(p_{1})-10\cdot (p_{2}-p_{1})^{2}<
\end{equation}
$$L(p_{1})-10\cdot 2^{-2M_{m,n(m)}}=L(p_{1})-10\cdot \ddd_{m,n(m)}.$$
Similarly,
\begin{equation}\label{*C5*b}
f_{m,n(m)}(p_{3})<L(p_{3})-10\cdot \ddd_{m,n(m)}.
\end{equation}
Denote by ${\overline{L}}(x)$ the line parallel to $L(x)$, but  passing through 
$(p_{2},f(p_{2}))$. Since $|f(p_{2})-f_{m,n(m)}(p_{2})|<\ddd_{m,n(m)}$
we have $|{\overline{L}}(x)-L(x)|<\ddd_{m,n(m)}$ for any $x$.
Since $|f_{m,n(m)}(p_{i})-f(p_{i})|<\ddd_{m,n(m)}$ for $i=1,3$,
we infer from \eqref{*C5*a} and \eqref{*C5*b} that
\begin{equation}\label{*C5*c}
f(p_{1})<{\overline{L}}(p_{1})\text{ and }f(p_{3})<{\overline{L}}(p_{3}).
\end{equation}

Thus $f$ is concave on $\{ p_{1},p_{2},p_{3} \}$.
Hence, for any $j$ one can cover $A\cap  ([0,2^{-M_{m,n(m)}'}-2^{-M_{m,n(m)}}]+j\cdot 2^{-M_{m,n(m)}'})$,
by less than $6$ intervals of the form
$[(j'-1)\cdot 2^{-M_{m,n(m)}},j'\cdot 2^{-M_{m,n(m)}}]$.
This implies that $A\cap [0,2^{-M_{m,n(m)}'}]+j\cdot 2^{-M_{m,n(m)}'}$
can be covered by less than $7$ intervals of the form
$[(j'-1)\cdot 2^{-M_{m,n(m)}},j'\cdot 2^{-M_{m,n(m)}}]$.
Therefore, by \eqref{*C2*b}
\begin{equation}\label{*C6*a}
\can_{2,M_{m,n(m)}}(A)< 7\cdot 2^{M_{m,n(m)}'}\leq 7\cdot M_{m,n(m)}.
\end{equation}
Which implies that
$$\liminf_{m\to\oo}\frac{\log \can_{2,M_{m,n(m)}}(A)}{M_{m,n(m)} \log 2}=0$$
and hence $\ldimm A=0$.
\end{proof}

 Theorem \ref{*th5a*} and Lemma \ref{*lemAB*} imply that for $\frac{3}{2}\leq \aaa <1$ if $f(t)=\int_{0}^{t}B(x)dx$, where $B$ is a fractional Brownian motion of Hurst index
$\aaa$ then almost surely for any $A$ with $\udimm A>\aaa-1$ the restriction
$f|_{A}$ is not convex and $f\in C^{\aaa-}[0,1].$
 
 The case $1<\aaa<\frac{3}{2}$ is more interesting.
 The following theorem is true:
 \begin{theorem}\label{*5b*}
 Let $1\leq \aaa<2$. There exists $f\in C^{\aaa}_{1}[0,1]$
  such that for any $A\sse [0,1]$ with $\udimm A>\aaa-1$,
  $f|_{A}$ neither convex, nor concave.
 \end{theorem}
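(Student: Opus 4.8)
The plan is to exploit the tight connection between convex restrictions of $f$ and monotone restrictions of $f'$ supplied by Lemma \ref{*lemAB*}, using it where a monotone example is available and replacing it by a direct construction where it is not.

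For $3/2\le\aaa<2$ the link does most of the work. Let $B$ be a fractional Brownian motion of Hurst index $\aaa-1\in[1/2,1)$ and put $f(x)=\int_0^x B(t)\,dt$, so that $f'=B$ and $f\in C^{\aaa-}[0,1]$. Applying Theorem \ref{*th5a*} to $B$ and to $-B$, almost surely $B$ has no monotone restriction $B|_{B'}$ with $\udimm B'>\max\{2-\aaa,\aaa-1\}=\aaa-1$. Hence if $f|_A$ were convex, or concave, for some $A$ with $\udimm A>\aaa-1$, then Lemma \ref{*lemAB*} would furnish a set $B'$ with $\udimm B'\ge\udimm A>\aaa-1$ on which $B|_{B'}$ is monotone, a contradiction. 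This is clean but only probabilistic and only places $f$ in $C^{\aaa-}[0,1]$; to obtain a concrete function in the full class $C_1^{\aaa}[0,1]$ with the sharp threshold I would invoke the deterministic self-similar construction of \cite{[crm]}, which establishes the statement for every $\aaa$ with $1<\aaa<2$.

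The genuinely new case is the endpoint $\aaa=1$, where $\aaa-1=0$ and the reduction above cannot work even in principle: by M\'ath\'e's theorem every continuous $f'$ admits a monotone restriction of upper Minkowski dimension at least $1/2$, so no candidate derivative fails to be monotone on all sets of positive dimension, and Lemma \ref{*lemAB*} can never force $\udimm A\le 0$. I would therefore build $f$ directly, controlling second divided differences at every scale rather than passing through $f'$. Fixing a sequence $N_k\to\oo$ with $N_{k+1}/N_k\to\oo$ and a fixed periodic $C^1$ building block $\psi$ that oscillates within each of the $N$ elementary subintervals of its period so as to create both strictly convex and strictly concave triples of nearby points, I would set $f(x)=\sum_k\lll_k\,\psi(N_k x)$, choosing the amplitudes $\lll_k$ so that $\calla(f)=\sup_{x\ne y}|f'(x)-f'(y)|\le 1$ (hence $f\in C_1^{1}[0,1]$), while at scale $N_k^{-1}$ the $k$-th term dominates the second divided difference of $f$ over triples spaced $\sim N_k^{-1}$, so that these defects pass to $f$ itself. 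The core estimate, modelled on the counting arguments in the proofs of Theorems \ref{*th4} and \ref{*th11b}, is that convexity (respectively concavity) of $f|_A$ forces $A$ to meet only boundedly many elementary subintervals inside each coarse block at every sufficiently fine scale; since $N_{k+1}/N_k\to\oo$ this keeps $\can_{N_k,\ell}(A)$ subexponential in $\ell$, whence $\udimm A=0=\aaa-1$.

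I expect the main obstacle to be the passage from ``infinitely many scales'' to ``all scales''. For a \emph{generic} function and the \emph{lower} dimension $\ldimm$, as in Theorems \ref{*th4} and \ref{*th11b}, it is enough to install the defect along a sparse sequence of scales, which a Baire category argument produces for free; here, bounding the \emph{upper} Minkowski dimension of \emph{every} convex or concave restriction of one \emph{fixed} function requires the obstruction to act at all fine scales at once. Synchronising the scales $N_k$ and amplitudes $\lll_k$ so that this holds while respecting the normalisation $\calla(f)\le 1$ is the delicate point; once it is arranged, the per-scale counting lemma together with $N_{k+1}/N_k\to\oo$ completes the proof, paralleling the construction of \cite{[crm]} for $1<\aaa<2$.
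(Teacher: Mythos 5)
The paper does not prove Theorem \ref{*5b*} in this text at all: it states the theorem and defers the proof for $1<\aaa<2$ to \cite{[crm]}, after recording, as a remark, exactly the fractional-Brownian-motion observation you make for $3/2\leq\aaa<2$. So the portions of your proposal that are sound coincide with the paper's surrounding remarks and with its citation, and they do not amount to a proof. In particular, the fBm route produces (almost surely) a function $f(t)=\int_0^t B$ lying only in $C^{\aaa-}[0,1]$, not in the required class $C^{\aaa}_1[0,1]$, and the monotone-restriction bound from Theorem \ref{*th5a*} is $\max\{2-\aaa,\aaa-1\}$, which equals $\aaa-1$ only when $\aaa\geq 3/2$; for $1<\aaa<3/2$ and for the sharp H\"older class you fall back on citing \cite{[crm]}, which is invoking the result rather than proving it.

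The genuinely new content of your proposal is the $\aaa=1$ sketch, and that is where the gap sits. You correctly identify that the entire difficulty is to make the convexity obstruction act at \emph{every} sufficiently fine scale simultaneously (so as to bound $\udimm$ rather than $\ldimm$) while keeping $\calla(f)\leq 1$, and you then write that once this synchronisation "is arranged" the counting lemma finishes the proof. That step \emph{is} the theorem, and it is not arranged. Concretely: if the $k$-th block $\lll_k\psi(N_kx)$ dominates second divided differences only at spacings comparable to $N_k^{-1}$, you control the covering numbers only at scales $\ell\approx\log N_k$, and between two consecutive controlled scales $\ell_k<\ell<\ell_{k+1}$ the count can grow by a full factor $2^{\ell-\ell_k}$; to force $\udimm A=0$ you need $\log N_{k+1}/\log N_k\to 1$, whereas the lacunarity $N_{k+1}/N_k\to\oo$ is imposed so that a single term dominates. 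These two requirements are not contradictory, but your stated inference ("since $N_{k+1}/N_k\to\oo$ this keeps $\can$ subexponential in $\ell$") runs in the wrong direction: sparser controlled scales make the upper Minkowski bound worse, not better. Reconciling per-scale domination, multiplicative density of the controlled scales in the exponent, and the normalisation $\calla(f)\leq 1$ is precisely the technical work that \cite{[crm]} carries out for $1<\aaa<2$, and your sketch does not carry it out for $\aaa=1$.
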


 The proof of this theorem is quite technical and for the cases $1<\aaa<2$ is the subject of another paper
\cite{[crm]}.


\end{document}